\documentclass[11 pt]{amsart}
\usepackage{latexsym,amscd,amssymb, graphicx, amsthm}
\usepackage{tikz}

\usepackage[margin=1in]{geometry}

\numberwithin{equation}{section}

\newtheorem{theorem}{Theorem}[section]

\newtheorem{corollary}[theorem]{Corollary}
\newtheorem{lemma}[theorem]{Lemma}

\newtheorem{problem}[theorem]{Problem}

\newtheorem{remark}[theorem]{Remark}

\newtheorem{definition}[theorem]{Definition}
\theoremstyle{definition}

\newcommand{\comaj}{\mathrm {comaj}}
\newcommand{\maj}{\mathrm {maj}}

\newcommand{\Des}{\mathrm {Des}}

\newcommand{\des}{\mathrm {des}}
\newcommand{\asc}{\mathrm {asc}}

\newcommand{\Hilb}{\mathrm {Hilb}}
\newcommand{\symm}{\mathfrak{S}}
\newcommand{\grFrob}{\mathrm {grFrob}}
\newcommand{\Frob}{\mathrm {Frob}}

\newcommand{\CC}{\mathbb {C}}

\newcommand{\ZZ}{\mathbb {Z}}

\newcommand{\OP}{\mathcal{OP}}

\newcommand{\AAA}{\mathcal{A}}
\newcommand{\DDD}{\mathcal{D}}
\newcommand{\EEE}{\mathcal{E}}
\newcommand{\FFF}{\mathcal{F}}
\newcommand{\III}{\mathcal{I}}
\newcommand{\JJJ}{\mathcal{J}}
\newcommand{\RRR}{\mathcal{R}}
\newcommand{\SSS}{\mathcal{S}}
\newcommand{\UUU}{\mathcal{U}}
\newcommand{\VVV}{\mathcal{V}}

\newcommand{\GL}{\mathrm {GL}}

\newcommand{\xx}{\mathbf {x}}
\newcommand{\yy}{\mathbf {y}}

\newcommand{\cB}{\mathcal{B}}

%--------------------------------------------------

\begin{document}

\title[Generalized coinvariant algebras for $G(r,1,n)$ in the Stanley--Reisner setting]
{Generalized coinvariant algebras for $G(r,1,n)$ in the Stanley--Reisner setting}

\author{Dani\"el Kroes}
\address
{Department of Mathematics \newline \indent
University of California, San Diego \newline \indent
La Jolla, CA, 92093-0112, USA}
\email{dkroes@ucsd.edu}

\begin{abstract}
Let $r$ and $n$ be positive integers, let $G_n$ be the complex reflection group of $n \times n$ monomial matrices whose entries are $r^{\textrm{th}}$ roots of unity and let $0 \leq k \leq n$ be an integer. Recently, Haglund, Rhoades and Shimozono ($r=1$) and Chan and Rhoades ($r>1$) introduced quotients $R_{n,k}$ (for $r>1$) and $S_{n,k}$ (for $r \geq 1$) of the polynomial ring $\mathbb{C}[x_1,\ldots,x_n]$ in $n$ variables, which for $k=n$ reduce to the classical coinvariant algebra attached to $G_n$. When $n=k$ and $r=1$, Garsia and Stanton exhibited a quotient of $\mathbb{C}[\mathbf{y}_S]$ isomorphic to the coinvariant algebra, where $\mathbb{C}[\mathbf{y}_S]$ is the polynomial ring in $2^n-1$ variables whose variables are indexed by nonempty subsets $S \subseteq [n]$. In this paper, we will define analogous quotients that are isomorphic to $R_{n,k}$ and $S_{n,k}$.
\end{abstract}

\keywords{Coinvariant algebra, Garsia--Stanton basis}
\maketitle

\section{Introduction}
\label{Introduction}

In this paper we will study recently introduced generalizations of the \emph{coinvariant algebra} attached to the complex reflection group $G(r,1,n)$ of $r$-colored permutations of $[n] = \{1,2,\ldots,n\}$. In the case $r = 1$, when this group is the symmetric group $\symm_n$, the coinvariant algebra is defined as follows. Let $\xx_n = (x_1,\ldots,x_n)$ be a list of $n$ variables and let $\symm_n$ act on $\CC[\xx_n]$ by permutation of the variables. The \emph{ring of symmetric functions} is the corresponding invariant subring
\[
\CC[\xx_n]^{\symm_n} = \{f \in \CC[\xx_n] \ : \ \sigma \cdot f = f \textup{ for all } \sigma \in \symm_n\},
\]
and the \emph{invariant ideal} is the ideal $I_n = \langle \CC[\xx_n]^{\symm_n}_+ \rangle \subseteq \CC[\xx_n]$ generated by all the symmetric functions with vanishing constant term. The \emph{coinvariant algebra} attached to $\symm_n$ is the quotient $R_n := \CC[\xx_n]/I_n$.

It is well known that the ring of symmetric functions has algebraically independent homogeneous generators $e_1(\xx_n), \ldots, e_n(\xx_n)$ where $e_d(\xx_n)$ is the \emph{elementary symmetric function of degree $d$} defined by
\[
e_d(\xx_n) = \sum_{1 \leq i_1 < \ldots < i_d \leq n} x_{i_1} \cdots x_{i_d}.
\]
Using this we may write $I_n$ more succinctly as $I_n = \langle e_1(\xx_n),\ldots,e_n(\xx_n) \rangle$ and consequently we have
\[
R_n = \frac{\CC[\xx_n]}{\langle e_1(\xx_n),\ldots,e_n(\xx_n) \rangle}.
\]

Note that $R_n$ is a graded $\symm_n$-module. By Chevalley's theorem \cite{C}, it is known that as an ungraded module $R_n$ is isomorphic to $\CC[\symm_n]$, the regular representation of $\symm_n$. Furthermore, $R_n$ has Hilbert series given by
\[
\Hilb(R_n,q) = [n]!_q = \sum_{\sigma \in \symm_n} q^{\maj(\sigma)},
\]
where $\maj$ is the \emph{major index statistic} on $\symm_n$. This raises the question of whether there is a vector space basis for $R_n$ whose elements $\xx_{\sigma}$ are naturally indexed by permutations $\sigma \in \symm_n$ in such a way $\xx_{\sigma}$ has degree $\maj(\sigma)$, a question that was affirmatively answered by Garsia \cite{G}. Furthermore, Garsia and Stanton \cite{GS} exhibited the coinvariant algebra as a quotient of the polynomial ring $\CC[\yy_S]$ where $\yy_S = \{y_{\{1\}}, \ldots, ,y_{\{n\}}, \ldots, y_{\{1,2,\ldots,n\}}\}$ is a list of variables indexed by the non-empty subsets $S \subseteq [n]$. In particular, they showed an isomorphism between $R_n$ and
\[
\RRR_n = \frac{\CC[\yy_S]}{\langle y_S \cdot y_T, \theta_1, \ldots, \theta_n \rangle},
\]
where the generators of the ideal in the denominator come in two types; firstly we have $y_S \cdot y_T$ for non-empty subsets $S,T \subseteq [n]$ with $S \not \subseteq T$ and $T \not \subseteq S$ and secondly we have $\theta_i = \sum_{S \subseteq [n], |S| = i} y_S$ for $1 \leq i \leq n$. Furthermore, the quotient $\RRR_n$ has a similar Garsia--Stanton basis that witnesses not only the major index statistic on $\symm_n$, but also the descent statistic.

Recently, Haglund, Rhoades and Shimozono \cite{HRS} defined a generalization $R_{n,k}$ of $R_n$ for any pair of integers $0 \leq k \leq n$ with $n \geq 1$. More specifically, they set
\[
I_{n,k} = \langle e_{n-k+1}(\xx_n), \ldots, e_n(\xx_n), x_1^k, \ldots, x_n^k \rangle \qquad \textup{and} \qquad R_{n,k} = \frac{\CC[\xx_n]}{I_{n,k}}.
\]
When $n = k$ the quotient $R_{n,n}$ coincides with the classical coinvariant algebra $R_n$. In their paper, they show that as an ungraded $\symm_n$-module we have $R_{n,k} \cong \CC[\OP_{n,k}]$, where $\OP_{n,k}$ is the set of ordered set partitions of $[n]$ with $k$ blocks. Furthermore, there exists a statistic $\comaj$ on $\OP_{n,k}$, which reduces to the complement of $\maj$ when $k = n$, and the Hilbert series of $R_{n,k}$ is given by
\[
\Hilb(R_{n,k},q) = \sum_{\pi \in \OP_{n,k}} q^{\comaj(\pi)}.
\]
In addition to this, they exhibit a Garsia--Stanton type basis for $R_{n,k}$ witnessing this fact.

In this paper we will prove that these generalized coinvariant algebras also have similar quotients of $\CC[\yy_S]$, in the sense that we have an $S_n$-isomorphism between $R_{n,k}$ and $\RRR_{n,k}$, where $\RRR_{n,k}$ is defined as below. We remark that when $n = k$ the quotient $\RRR_{n,n}$ coincides with the quotient $\RRR_n$ introduced by Garsia and Stanton \cite{GS}.
\begin{definition}
\label{intro-definition}
Let $0 \leq k \leq n$ be integers with $n \geq 1$. In $\CC[\yy_S]$ we define the following ideal:
\begin{equation*}
\III_{n,k} = \langle y_S \cdot y_T, \theta_{n-k+1},\ldots,\theta_n, y_{S_1} \cdots y_{S_k} \rangle,
\end{equation*}
where $S$ and $T$ range over all pairs of nonempty subsets $S,T \subseteq [n]$ with $S \not \subseteq T$ and $T \not \subseteq S$,
\[
\theta_i = \sum_{S \subseteq [n], |S| = i} y_S
\]
and $(S_1,\ldots,S_k)$ ranges over all length $k$ multichains $S_1 \subseteq \ldots \subseteq S_k$ of nonempty subsets of $[n]$.

Finally, set $\RRR_{n,k} = \CC[\yy_S]/\III_{n,k}$.
\end{definition}
Additionally, we will show that the Garsia--Stanton type basis in Haglund, Rhoades and Shimozono \cite{HRS} has an equivalent natural basis of $\RRR_{n,k}$. Our methods are inspired by Braun and Olsen \cite{BO}, who obtain a similar result when $n = k$. In theory, our method gives a way to define a descent statistic on $\OP_{n,k}$, although we currently do not have a nice combinatorial interpretation for this.

Furthermore, when $r > 1$ Chan and Rhoades \cite{CR} define two generalizations of the coinvariant algebra attached to $G(r,1,n)$. Similarly to the case $r=1$, one of these modules is isomorphic to $\CC[\OP^{(r)}_{n,k}]$, where $\OP_{n,k}^{(r)}$ is the set of block ordered set partitions of $[n]$ with $k$ blocks, where each element of $[n]$ is assigned one of $r$ colors. The other module is isomorphic to $\CC[\FFF_{n,k}^{(r)}]$, where $\FFF_{n,k}^{(r)}$ is the set of $k$-dimensional faces attached to the Coxeter complex of $G(r,1,n)$ which can be thought of as $r$-colored ordered set partitions of some set $S \subseteq [n]$ into $k$ blocks. Again, the grading of these modules is controlled by the $\comaj$-statistic on $\FFF_{n,k}$ and $\OP_{n,k}$, and they describe a Garsia-Stanton type basis in this case as well. Just as for $r = 1$ we will show an isomorphism between their modules and an appropriate quotient of $\CC[\yy_S]$ and show that this Garsia--Stanton type basis has an analogous natural basis of our quotients.

In section \ref{Background} we will review the necessary background concerning the classical coinvariant algebra of $G(r,1,n)$, ordered set partitions and their $\maj$ and $\comaj$ statistics and a bit of Gr\"obner theory. In section \ref{Bases} we will define the appropriate analogues of Definition \ref{intro-definition} for all $r \geq 1$ and deduce the analogues of the Garsia--Stanton bases. In section \ref{Filtration} we will use filtrations on $R_{n,k}$, $S_{n,k}$, $\RRR_{n,k}$ and $\SSS_{n,k}$ to deduce the desired isomorphisms. In section \ref{Frobenius} we will discuss the multi-graded Frobenius image of the modules in the case of the symmetric group $\symm_n$. In section \ref{conclusion} we will conclude with some remarks and possible future directions.

\section{Background}
\label{Background}

\subsection{Ordered set partitions and $G_n$ faces}
Fix an integer $r \geq 1$ that will be suppressed for the remainder of this paper. Let us introduce $G(r,1,n)$ which from now on will be denoted by $G_n$. Recall that $G_n \subseteq \GL_n(\CC)$ is the reflection group consisting of all monomial matrices with entries in $\{1,\zeta,\zeta^2,\ldots,\zeta^{r-1}\}$ where $\zeta = \exp(2\pi i/r)$ is a primitive $r$-th root of unity. Matrices in $G_n$ can be thought of as \emph{$r$-colored permutations} $\pi_1^{c_1} \cdots \pi_n^{c_n}$ where $\pi_1 \cdots \pi_n \in \symm_n$ and $c_1, \ldots, c_n \in \{0,1,\ldots,r-1\}$ indicate the colors of the $\pi_i$. For example, when $r = 4$, the element
\[
\begin{pmatrix}
0 & i & 0 & 0 & 0 \\
0 & 0 & 0 & -1 & 0 \\
-i & 0 & 0 & 0 & 0 \\
0 & 0 & 0 & 0 & 1 \\
0 & 0 & -1 & 0 & 0
\end{pmatrix}
\in G_5
\]
will be interpreted as $3^3 1^1 5^2 2^2 4^0$.

We will consider the alphabet $\AAA_r = \{i^c \ : \ i \in [n], c \in \{0,1,\ldots,r-1\}\}$ consisting of letters $i$ with an associated color $c$. On this alphabet we put an order $<$ that weighs colors more heavily than letters in the following way:
\[
1^{r-1} < 2^{r-1} < \ldots < n^{r-1} < 1^{r-2} < 2^{r-2} < \ldots < 1^0 < 2^0 < \ldots < n^0.
\]
Given a word $w = w_1^{c_1} \cdots w_r^{c_r}$ in $\AAA_r$ we define the \emph{descent set} of $w$ by
\begin{equation}
\Des(w) = \{1 \leq i \leq n-1 \ : \ w_i^{c_i} > w_{i+1}^{c_{i+1}}\},
\end{equation}
and we will write $\des(w) := |\Des(w)|$. The \emph{major index} of $w$ is given by
\begin{equation}
\maj(w) = c(w) + r \cdot \sum_{i \in \Des(w)} i,
\end{equation}
where $c(w)$ denotes the sum of the colors of $w$. Given our interpretation of elements of $G_n$ as $r$-colored permutations these definitions directly apply to elements of $g \in G_n$. For example, when $w = 3^3 1^1 5^2 2^2 4^0$ as above we have $\Des(w) = \{2,3\}$ and $\maj(w) = (3 + 1 + 2 + 2 + 0) + 4 \cdot (2 + 3) = 28$.

An \emph{ordered set partition} of $[n]$ is a partition of $[n]$ with a total order on the blocks. An \emph{$r$-colored ordered set partition} is
an ordered set partition of $[n]$ with a color associated to each of the elements. For $1 \leq k \leq n$, let $\OP_{n,k}$ be the set of $r$-colored ordered set partitions of $[n]$ with $k$ blocks. Elements within a block will usually be written in increasing order with respect to $<$. When $r = 4$, an example of an element in $\OP_{9,5}$ is given by
\[
\{4^3,2^2,3^2\} \prec \{9^1\} \prec \{6^1,1^0\} \prec \{5^2\} \prec \{7^2,8^1\}.
\]
Throughout this paper, we will use the \emph{ascent-starred model} for ordered set partitions. This means that we will write down all the elements in the ordered set partition in the order they appear and use stars to indicate which elements should be grouped together in a block. In this model, the above example will be represented as
\[
4^3_* 2^2_* 3^2 \ 9^1 \ 6^1_*1^0 \ 5^2 \ 7^2_*8^1.
\]
Alternatively, we can represent this as a pair $(g,\lambda)$ where $g \in G_n$ satisfies $\des(g) < k$ and $\lambda$ is a partition with $k - \des(g) - 1$ parts, all at most $n-k$, that indicates which of the ascents should be starred. In the above example, we would have $g = 4^3 2^2 3^2 9^1 6^1 1^0 5^2 7^2 8^1$, which has $\Des(g) = \{4,6\}$, so $\des(g) = 2$ and of the $6$ ascents, the first, second, fourth and sixth are starred. Representing this as a path from $(0,2)$ to $(2,4)$, where each starred ascent contributes a step east and each unstarred ascent contributes a step south, we get the following picture
\[
\begin{tikzpicture}
\draw[step=1cm,gray,very thin] (0,0) grid (4,2);
\draw[very thick] (0,2) -- (2,2) -- (2,1) -- (3,1) -- (3,0) -- (4,0);
\node[below] at (-0.2,0) {$(0,0)$};
\node[above] at (-0.2,2) {$(0,2)$};
\node[below] at (4.2,0) {$(4,0)$};
\node[above] at (4.2,2) {$(4,2)$};
\end{tikzpicture}
\]
so the corresponding partition $\lambda$ is $(3,2)$ the representative is $(4^3 2^2 3^2 9^1 6^1 1^0 5^2 7^2 8^1,(3,2))$.

\begin{definition}
Let $(g,\lambda) \in \OP_{n,k}$. Define
\[
\comaj((g,\lambda)) = \maj(g) + r |\lambda|.
\]
\end{definition}

\begin{remark}
\begin{itemize}
\item[1.] In the case $n = k$ this definition actually swaps the notion of $\maj$ and $\comaj$ on $G_n$. This means that we have to look at ascents rather than descents, but this does not change the combinatorial flavor of the statistic, nor its distribution.
\item[2.] In Lemma \ref{comajr1} below we will show that when $r = 1$ this notion of $\comaj$ coincides with the $\comaj$-statistic defined in Haglund, Rhoades and Shimozono \cite{HRS}. However, for $r > 1$ this notion of $\comaj$ is not complementary to the $\maj$-statistic defined in Chan and Rhoades \cite{CR}, as they use a descent-starred model for ordered set partitions. Furthermore, their $\comaj$-statistic includes a sum over the complements of the colors of the ordered set partition, whereas our definition includes simply a sum over the colors of the ordered set partition. However, these definitions do not change the idea behind the ordered set partition and each such choice to be made does not affect the eventual distribution of the statistic.
\end{itemize}
\end{remark}

\begin{lemma}
\label{comajr1}
Let $r = 1$ and let $\maj((g,\lambda))$ be as in equation (2.4) in \cite{HRS}. Then we have
\[
\comaj((g,\lambda)) = (n-k)(k-1) + \binom{k}{2} - \maj((g,\lambda))
\]
\end{lemma}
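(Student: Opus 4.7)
The plan is to unpack both statistics on the common ascent-starred encoding of $\OP_{n,k}$ used throughout the paper and verify the identity by a direct computation, splitting the sum $\comaj((g,\lambda)) + \maj_{\text{HRS}}((g,\lambda))$ into a permutation contribution and a partition contribution.

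First, I would write out HRS equation (2.4) in detail and produce a dictionary between their descent-starred encoding of an ordered set partition and the ascent-starred encoding used here. Under the natural involution on lattice paths that swaps east and south steps, our representative $(g,\lambda)$ corresponds to the HRS representative in which $\lambda$ is replaced by its complement $\lambda^{c}$ inside the rectangle of dimensions $(k - \des(g) - 1) \times (n-k)$. This produces an explicit formula for $\maj_{\text{HRS}}((g,\lambda))$ in terms of $g$ and $\lambda^{c}$, cleanly separated into a permutation-level piece and a partition-level piece.

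Second, the target identity
\[
\maj(g) + |\lambda| + \maj_{\text{HRS}}((g,\lambda)) = (n-k)(k-1) + \binom{k}{2}
\]
splits into a partition part and a permutation part. The partition part is immediate from
\[
|\lambda| + |\lambda^{c}| = (k - \des(g) - 1)(n-k),
\]
and the permutation part follows from the classical identity $\maj(g) + \comaj(g) = \binom{n}{2}$, once the permutation-level piece of $\maj_{\text{HRS}}$ is identified with $\comaj(g)$ modulo a correction depending on $\des(g)$. Collecting terms, all dependence on $\des(g)$ must cancel, as it does on the right-hand side, leaving the pure constant $(n-k)(k-1) + \binom{k}{2}$.

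The main obstacle is the precise bookkeeping between the two conventions: HRS's descent-starred model versus the ascent-starred model used here, together with the attendant complementation of $\lambda$. Once the two encodings are aligned, the remaining verification is a short rearrangement of binomial coefficients. As sanity checks I would verify the boundary cases $k = n$ (where the identity reduces to the classical $\maj(g) + \comaj(g) = \binom{n}{2}$) and $k = 1$ (where both sides vanish), and spot-check the small example $(n,k) = (3,2)$ by hand.
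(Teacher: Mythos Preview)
Your overall strategy---reduce the identity to the classical $\maj(g)+\comaj(g)=\binom{n}{2}$ together with a partition-level correction---is the same as the paper's. Where you diverge is in the mechanism for extracting the partition correction. You propose to pass through a dictionary between a descent-starred HRS encoding and the ascent-starred encoding here, replacing $\lambda$ by its box-complement $\lambda^c$ and then using $|\lambda|+|\lambda^c|=(k-\des(g)-1)(n-k)$.

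The paper does not do this, and in fact no encoding translation is needed. HRS's statistic is already computable on the \emph{same} representative $(g,\lambda)$: it is the sum over the ascents of $g$ of the weight $w_i$, where $w_i$ is the number of blocks completed by position $i$. The paper simply compares this to $\comaj(g)=\sum_{i\in\Asc(g)} i$ and shows that each starred ascent lowers all subsequent ascent-weights by one, so the total discrepancy is $\binom{n-k+1}{2}+|\lambda|$; hence $\maj_{\mathrm{HRS}}((g,\lambda))=\comaj(g)-\binom{n-k+1}{2}-|\lambda|$. The remaining arithmetic is then a two-line rearrangement using $\maj(g)+\comaj(g)=\binom{n}{2}$.

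So your plan is workable in spirit but introduces an unnecessary layer: the descent-starred model you invoke is the Chan--Rhoades convention for $r>1$, not the HRS convention for $r=1$, and the complementation $\lambda\mapsto\lambda^c$ never enters. The paper's route is shorter because it stays inside one encoding throughout and reads off the correction $|\lambda|$ directly rather than via $|\lambda^c|$. Your sanity checks at $k=n$ and $k=1$ are good and would of course go through either way.
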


\begin{proof}
Note that $\maj((g,\lambda))$ equals the sum over all the \emph{ascents} of $\sigma$ with respect to the weight sequence $(w_1,\ldots,w_n)$, where $w_i$ is the number of completed blocks when reaching the $i^{\textrm{th}}$ element of $g$. For example, when $(g,\lambda) = (2461357,(1,1))$, corresponding to $(24|6|1|357)$, the weight sequence is given by $(0,1,2,3,3,3,4)$. Now, $\comaj(g)$ is the sum over all the \emph{ascents} of $\sigma$ with respect to the weight sequence $(1,2,\ldots,n)$. Now, every starred ascent of $(g,\lambda)$ results in the weights of all the ascents after (and including) that ascent to be decreased by $1$. Now remember that $\lambda$ is a permutation of which each part has length at most $n-k$, and the stars correspond to the horizontal segments in the left bottom justified Ferrers diagram. Therefore, the number of affected ascents equals
\begin{align*}
(1 + &\textup{the height of the last column})+ (2 + \textup{the height of the second to last column}) + \ldots \\ & \quad + ((n-k) + \textup{the height of the first column}) = (1 + 2 + \ldots + (n-k)) + |\lambda|.
\end{align*}
Therefore, we have
\[
\maj((g,\lambda)) = \comaj(g) - (1 + \ldots + (n-k)) - |\lambda|,
\]
so we can compute
\begin{align*}
\comaj((g,&\lambda)) = (1 + \ldots + (k-1)) + (n-k)(k-1) - \maj((g,\lambda)) \\
    &= (1 + \ldots + (k-1)) + (n-k)(k-1) + (1 + \ldots + (n-k)) - \comaj(g) + |\lambda| \\
    &= (1 + \ldots + (k-1)) + (k + \ldots + (n-1)) - \comaj(g) + |\lambda| \\
    & = 1 + 2 + \ldots + (n-1) - \comaj(g) + |\lambda| = \maj(g) + |\lambda|. \qedhere
\end{align*}
\end{proof}

We will now introduce a similar model for $G_n$ faces. We first recall a definition of Chan and Rhoades \cite[Definition. 2.1]{CR}.
\begin{definition}
Let $r \geq 2$. A \emph{$G_n$-face} is an ordered set partition $\sigma = (B_1 \ | \ B_2 \ | \ \cdots \ | \ B_m)$ of $[n]$ where each number is assigned a color from $\{0,1,\ldots,r-1\}$, except for possibly (all) the elements in $B_1$.

If the numbers in $B_1$ are colored, we say that $\sigma$ has dimension $m$ and if the numbers are uncolored we say that $\sigma$ has dimension $m-1$. We define $\FFF_{n,k}$ to be the set of $G_n$-faces of dimension $k$.
\end{definition}

If the elements of $B_1$ are uncolored, $B_1$ is referred to as the \emph{zero-block} of $\sigma$. For example, when $r = 3$ we have that $(14 \ | \ 5^2 2^1 3^0 \ | \ 7^2 \ | \ 6^0) \in \FFF_{7,3}$ and $(1^2 4^2 \ | \ 5^2 2^1 3^0 \ | \ 7^2 \ | \ 6^0) \in \FFF_{7,4}$. We can represent elements of $\FFF_{n,k}$ in a way similar to our ascent-starred model for ordered set partitions. Elements $\sigma \in \FFF_{n,k}$ will be represented as triples $(Z,g,\lambda)$ where $Z \subseteq [n]$ has size $|Z| \leq n-k$, $g$ is a word in which each of the letters of $[n] \backslash Z$ appears once and is labeled with a color in $\{0,1,\ldots,r-1\}$, $\des(g) < k$ and $\lambda$ is a partition with $k - \des(g) - 1$ parts that are all at most $n - |Z| - k$. Here, $Z$ represents of the elements of $[n]$ that belong to the (possibly empty) zero-block of $\sigma$, and $(g,\lambda)$ will yield the remaining block ordered set partition using a similar process to before, by choosing which ascents of $g$ to star according to $\lambda$. Note that $g$ might not include every number in $[n]$, but that the same process still works. For example, the two examples above will be represented by $(\{1,4\},5^22^13^17^26^0,(2))$ and $(\emptyset,1^24^15^22^13^17^26^0,(3,1))$ respectively.

The $\comaj$-statistic on $\FFF_{n,k}$ is defined as follows.
\begin{definition}
Let $(Z,g,\lambda) \in \FFF_{n,k}$. Define
\[
\comaj((g,\lambda)) = k r |Z| + \maj(g) + r |\lambda|.
\]
\end{definition}

\subsection{Generalized coinvariant algebras for $G(r,1,n)$}
Let us now introduce the coinvariant algebras studied in this paper. Recall that $\GL_n(\CC)$ acts on $\CC[\xx_n] := \CC[x_1,\ldots,x_n]$ by linear substitutions. Therefore, we can consider subring of \emph{invariant polynomials} defined by
\[
\CC[\xx_n]^{G_n} = \{f \in \CC[\xx_n] \ : \ g \cdot f(\xx_n) = f(\xx_n) \textup{ for all } g \in G_n\}.
\]
It is a classical result that this ring has algebraically independent homogeneous generators $e_1(\xx_n^r), \ldots, e_n(\xx_n^r)$, where $e_d(\xx_n^r) = e_d(x_1^r,\ldots,x_n^r)$. Therefore, the coinvariant algebra $R_n := \frac{\CC[\xx_n]}{\langle \CC[\xx_n]^{G_n}_+ \rangle}$ is equal to
\[
R_n = \frac{\CC[\xx_n]}{\langle e_1(\xx_n^r), \ldots, e_n(\xx_n^r) \rangle}.
\]
Furthermore, the Hilbert series of $R_n$ is given by
\[
\Hilb(R_n,q) = \sum_{g \in G_n} q^{\maj(g)}.
\]

In \cite{CR}, Chan and Rhoades define two generalizations of $R_n$ for all pairs of integers $0 \leq k \leq n$ with $n \geq 1$.
\begin{definition}
Let $n,k$ be integers with $n$ positive and $0 \leq k \leq n$. Let $I_{n,k}, J_{n,k} \subseteq \CC[\xx_n]$ be the ideals
\begin{align}
I_{n,k} &= \langle x_1^{kr+1}, \ldots, x_n^{kr+1}, e_{n-k+1}(\xx_n^r), \ldots, e_n(\xx_n^r) \rangle, \\
J_{n,k} &= \langle x_1^{kr}, \ldots, x_n^{kr}, e_{n-k+1}(\xx_n^r), \ldots, e_n(\xx_n^r) \rangle,
\end{align}
and let $R_{n,k}$ and $S_{n,k}$ be the corresponding quotient rings
\begin{equation}
R_{n,k} = \CC[\xx_n]/I_{n,k} \qquad \textup{and} \qquad S_{n,k} = \CC[\xx_n]/J_{n,k}.
\end{equation}
\end{definition}
We point out that this definition changes notation a little compared to Haglund, Rhoades, Shimozono \cite{HRS}. The definitions of $I_{n,k}$ and $R_{n,k}$ as in Haglund, Rhoades, Shimozono can be recovered by specializing $r = 1$ in the above definition of $J_{n,k}$ and $S_{n,k}$, rather than specializing in $I_{n,k}$ and $R_{n,k}$ as one would hope.

One of the main results \cite[Cor. 4.12]{CR} by Chan and Rhoades is the following:
\begin{theorem}
As ungraded $G_n$-modules we have $R_{n,k} \cong \CC[\FFF_{n,k}]$ and $S_{n,k} \cong \CC[\OP_{n,k}]$, where $\FFF_{n,k}$ is the set of $k$-dimensional faces in the Coxeter complex attached to $G_n$ and $\OP_{n,k}$ is the set of $r$-colored $k$-block ordered set partitions of $[n]$.
\end{theorem}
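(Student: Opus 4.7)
The plan is to combine an explicit monomial basis computation via Gr\"obner theory with a character-theoretic upgrade from equality of dimensions to a $G_n$-equivariant isomorphism. First, for both $R_{n,k}$ and $S_{n,k}$, I would fix the lexicographic term order with $x_1 > \cdots > x_n$ and compute Gr\"obner bases for $I_{n,k}$ and $J_{n,k}$. The variable powers $x_i^{kr+1}$ or $x_i^{kr}$ are already monomial generators, while each $e_{n-k+j}(\xx_n^r)$ has leading monomial $x_1^r x_2^r \cdots x_{n-k+j}^r$. Following the template of Haglund--Rhoades--Shimozono (for $r=1$) and Garsia's original argument for the classical coinvariant algebra, I would confirm that these generators form a Gr\"obner basis by verifying the relevant $S$-pair reductions, and then read off the standard monomials as the complement of the monomial initial ideal.

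Next I would set up an explicit bijection between standard monomials and $\OP_{n,k}$ or $\FFF_{n,k}$. In the $S_{n,k}$ case, a standard monomial $x_1^{a_1} \cdots x_n^{a_n}$ satisfies $0 \leq a_i \leq kr-1$, so we may write $a_i = r b_i + c_i$ uniquely with $0 \leq c_i < r$ and $0 \leq b_i < k$. The color sequence $(c_1,\ldots,c_n)$ records the colors and the sequence $(b_1,\ldots,b_n)$ encodes the block structure through the ascent-starred model and its $\lambda$-component. In the $R_{n,k}$ case, the extra allowed value $a_i = kr$ produces an uncolored zero-block and gives the bijection with $\FFF_{n,k}$. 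A direct calculation shows that this bijection sends total degree to $\comaj$, recovering the predicted Hilbert series $\sum_{\pi} q^{\comaj(\pi)}$ and hence matching dimensions upon setting $q=1$.

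Finally, to upgrade the equality of dimensions to an ungraded $G_n$-module isomorphism, I would exploit that $\langle e_1(\xx_n^r), \ldots, e_n(\xx_n^r) \rangle \subseteq I_{n,k}, J_{n,k}$, so that each of $R_{n,k}$ and $S_{n,k}$ is a $G_n$-equivariant quotient of the classical coinvariant algebra of $G_n$. By Chevalley's theorem the latter realizes the regular representation, so every irreducible of $G_n$ appears in $R_{n,k}$ and $S_{n,k}$ with multiplicity at most its multiplicity in $\CC[G_n]$. I would then construct a $G_n$-stable filtration on each quotient whose associated graded is visibly a permutation module on the appropriate ascent-starred representatives, and compare with the permutation character of $G_n$ on $\OP_{n,k}$ and $\FFF_{n,k}$. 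Combined with the dimension count, this forces equality of all multiplicities.

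The main obstacle is this last step: producing the $G_n$-equivariant filtration, or equivalently pinning down the graded Frobenius characteristic in closed form. Even in the $r=1$ case of Haglund--Rhoades--Shimozono this requires a delicate dual-equivalence or skew-Schur expansion argument, and the colored case must account both for the $G_n$-action on colors and for the subtlety that elements within a block are only partially ordered under the color-weighted order $<$ on $\AAA_r$. Constructing filtration subquotients that can be identified with concrete permutation modules, and verifying stability under the full wreath-product action, is where essentially all of the technical effort of Chan--Rhoades is concentrated.
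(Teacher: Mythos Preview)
This theorem is not proved in the present paper at all: it is quoted as background from Chan--Rhoades \cite[Cor.~4.12]{CR}, so there is no ``paper's own proof'' to compare against. That said, your outline has a substantive error that would prevent it from working even as an independent proof.

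The step that fails is your claim that $\langle e_1(\xx_n^r),\ldots,e_n(\xx_n^r)\rangle \subseteq I_{n,k}, J_{n,k}$, on which your entire character-theoretic upgrade rests. This inclusion is false for $k<n$. For a concrete counterexample take $r=1$, $n=3$, $k=2$: here $J_{3,2}=\langle x_1^2,x_2^2,x_3^2,e_2,e_3\rangle$, and $e_1=x_1+x_2+x_3$ does \emph{not} lie in $J_{3,2}$. Indeed, $S_{3,2}$ and the classical coinvariant algebra $R_3$ both have dimension $6$, but in $S_{3,2}$ one has $x_1^2=0$ while in $R_3$ one has $x_1^2\neq 0$; were $e_1\in J_{3,2}$, the two rings would coincide. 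Consequently $R_{n,k}$ and $S_{n,k}$ are not quotients of the classical coinvariant algebra, and you cannot bound irreducible multiplicities from above by those in $\CC[G_n]$.

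Your Parts~1 and~2 (Gr\"obner basis and bijection with $\OP_{n,k}$, $\FFF_{n,k}$) are the right shape for establishing the Hilbert series and dimension count, and this is indeed how Chan--Rhoades proceed. But the passage from ``correct dimension'' to ``correct $G_n$-module'' in \cite{CR} does not go through a comparison with the regular representation; it goes through skewing operators and an explicit graded Frobenius computation, which is where the real work lies, as you yourself note in your final paragraph.
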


\subsection{Garsia--Stanton type bases for generalized coinvariant algebras}
Let us recall the Garsia--Stanton type bases for $R_{n,k}$ and $S_{n,k}$, as introduced by Chan and Rhoades \cite[Definitions. 5.7 \& 5.9]{CR}. In order to do so we need the classical Garsia--Stanton basis for $R_n$, indexed by elements $g \in G_n$. When $g = \pi_1^{c_1} \cdots \pi_n^{c_n}$, set $d_i(g) = \# \{j \geq i \ : \ j \in \Des(g)\}$ for the number of descents at or after position $i$. The \emph{descent monomial} $b_g$ is defined by
\[
b_g = \prod_{i=1}^n x_{\pi_i}^{r d_i(g) + c_i}.
\]
Now, the following set descends to a $\CC$-vector space bases for $S_{n,k}$ \cite[Def. 5.7 \& Thm. 5.8]{CR}:
\[
\DDD_{n,k} = \{b_g \cdot x_{\pi_1}^{r i_1} \cdots x_{\pi_{n-k}}^{r i_{n-k}} \ : \ g \in G_n, \des(g) < k, k - \des(g) > i_1 \geq \ldots \geq i_{n-k} \geq 0 \}.
\]
Furthermore, $R_{n,k}$ has a similar basis $\EEE_{n,k}$ \cite[Def. 5.9 \& Thm. 5.10]{CR} given by all elements of the form
\[
\prod_{j \in Z} x_j^{kr} \cdot b_{\pi_{z+1}^{c_{z+1}} \cdots \pi_n^{c_n}} \cdot x_{\pi_{z+1}}^{r i_{z+1}} \cdots x_{\pi_{n-k}}^{r i_{n-k}},
\]
where $Z \subseteq [n]$ satisfies $0 \leq |Z| = z \leq n-k$, $\pi_{z+1}^{c_{z+1}} \ldots \pi_n^{c_n}$ is a word on $[n] - Z$ with $\des(\pi_{z+1}^{c_{z+1}} \ldots \pi_n^{c_n}) < k$ and $k - \des(\pi_{z+1}^{c_{z+1}} \ldots \pi_n^{c_n}) > i_{z+1} \geq \ldots \geq i_{n-k} \geq 0$.

Since, $|\DDD_{n,k}| = |\OP_{n,k}|$ one might wonder whether there is a natural way to index those basis elements by elements of $\OP_{n,k}$. One way to do so is using our ascent starred model for $\OP_{n,k}$.
\begin{definition}
Given an element in $\OP_{n,k}$ represented by $(g,\lambda)$ we define
\[
b_{(g,\lambda)} := b_g \cdot x_{\pi_1}^{r i_1} \cdots x_{\pi_{n-k}}^{r i_{n-k}},
\]
where $i_j = \# \{m \ : \ \lambda_m \geq j \}$.
\end{definition}
For example, when $(g,\lambda) = (4^3 2^2 3^2 9^1 6^1 1^0 5^2 7^2 8^1,(3,2))$ is the example from before, we have $(d_1(g),\ldots,d_9(g)) = (2,2,2,2,1,1,0,0,0)$, hence
\[
b_g = x_4^{13} x_3^{12} x_2^{12} x_9^{11} x_6^6 x_1^5 x_5^2 x_7^2 x_8 \qquad \textup{and} \qquad b_{(g,\lambda)} = x_4^{21} x_3^{20} x_2^{16} x_9^{11} x_6^6 x_1^5 x_5^2 x_7^2 x_8
\]

Similarly, since $|\EEE_{n,k}| = |\FFF_{n,k}|$ we would like to index elements of $\EEE_{n,k}$ by elements of $\FFF_{n,k}$. Again, we will use our model for elements of $\FFF_{n,k}$. To this end, note that the definitions of $b_g$ and $b_{(g,\lambda)}$ make sense even if $g$ is just a word on the alphabet $\{i^j \ : \ 1 \leq i \leq n, 0 \leq j \leq r-1\}$. Therefore, we have the following definition.
\begin{definition}
Let $(Z,g,\lambda)$ represent an element in $\FFF_{n,k}$. Set
\[
b_{(Z,g,\lambda)} = \prod_{i \in Z} x_i^{kr} \cdot b_{(g,\lambda)}.
\]
\end{definition}

It is an easy check that $\DDD_{n,k} = \{b_{(g,\lambda)} \ : \ (g,\lambda) \in \OP_{n,k}\}$ and $\EEE_{n,k} = \{b_{(Z,g,\lambda)} \ : \ (Z,g,\lambda) \in \FFF_{n,k}\}$.

\subsection{Gr\"obner theory}
In this section we will recall some notions from Gr\"obner theory. Let $k$ be a field. A monomial order on $k[\xx_n]$ is a total order $<$ on the monomials $x_1^{a_1} \cdots x_n^{a_n}$ such that
\begin{itemize}
\item[1.] $1 \leq m$ for any monomial $m$;
\item[2.] if $m$ and $n$ are monomials with $m \leq n$ and $u$ is any other monomial, $um \leq un$.
\end{itemize}
Given a polynomial $0 \neq f \in k[\xx_n]$, the leading monomial $LM(f)$ is the monomial $m$ such that $m$ has nonzero coefficient in $f$ and such that any other monomial $n$ with this property has $n \leq m$. Given an ideal $I \subseteq k[\xx_n]$ let $LM(I)$ denote the ideal in $k[\xx_n]$ generated by all $LM(f)$ for $f \in I \backslash \{0\}$. We know that as a vector space $k[\xx_n]/I$ has a basis given by the classes of all monomials $m$ that are not contained in $LM(I)$ \cite[p. 248, Prop. 1]{CLOS}, which we will call the \emph{standard monomial basis for $k[\xx_n]/I$}. Note that $m \not \in LM(I)$ if and only if $m$ is not divisible by any monomial of the form $LM(f)$ for $f \in I \backslash \{0\}$.

In our case, we will equip $\CC[\yy_S]$ with the graded lexicographical monomial order with respect to the ordering of the variables by $y_S > y_T$ if $|S| > |T|$ or $|S| = |T|$ and $\min(S \backslash T) < \min(T \backslash S)$. For example, for $n=3$, this order is given by
\[
y_{\{1,2,3\}} > y_{\{1,2\}} > y_{\{1,3\}} > y_{\{2,3\}} > y_{\{1\}} > y_{\{2\}} > y_{\{3\}}.
\]
We remark that only this ordering on the variables is essential, because we will mainly work in homogeneous components of $\CC[\yy_S]$. Therefore, one could use any monomial order with this ordering on the variables instead.

\section{The quotients $\RRR_{n,k}$ and $\SSS_{n,k}$.}
\label{Bases}

We will now define the quotients of $\CC[\yy_S]$ that will be isomorphic to $R_{n,k}$ and $S_{n,k}$. Often we will show a result for the $S_{n,k}$ quotient and then most of the arguments will directly transfer over to the case of $R_{n,k}$. First, we will fix some notation. Recall that $\CC[\yy_S]$ is the polynomial ring in variables indexed by the nonempty subsets $S \subseteq [n]$.
\begin{definition}
\label{GarsiaStantonIJ}
Let $0 \leq k \leq n$ be integers with $n \geq 1$. In $\CC[\yy_S]$ we define the following ideals:
\begin{align*}
\III_{n,k} &= \langle y_S \cdot y_T, \theta_{n-k+1}, \ldots, \theta_n, y_{S_1} \cdots y_{S_{kr+1}} \rangle; \\
\JJJ_{n,k} &= \langle y_S \cdot y_T, \theta_{n-k+1}, \ldots, \theta_n, y_{S_1} \cdots y_{S_{kr}} \rangle,
\end{align*}
where $S$ and $T$ range over all pairs of nonempty subsets $S,T \subseteq [n]$ with $S \not \subseteq T$ and $T \not \subseteq S$,
\[
\theta_i = \sum_{S \subseteq [n], |S| = i} y_S^r
\]
and $(S_1,\ldots,S_{kr+1})$ and $(S_1,\ldots,S_{kr})$ range over all multichains $S_1 \subseteq \ldots \subseteq S_{kr+1}$ and $S_1 \subseteq \ldots \subseteq S_{kr}$ of nonempty subsets of $[n]$ of length $kr+1$ and $kr$ respectively.

Lastly, define $\RRR_{n,k} = \CC[\yy_S]/\III_{n,k}$ and $\SSS_{n,k} = \CC[\yy_S]/\JJJ_{n,k}$.
\end{definition}

Furthermore, let us define an important ring homomorphism $\CC[\yy_S] \rightarrow \CC[\xx_n]$.
\begin{definition}
Let $n$ be a positive integer. Let $\varphi : \CC[\yy_S] \rightarrow \CC[\xx_n]$ be the ring homomorphism defined by
\[
\varphi(y_S) = \prod_{i \in S} x_i.
\]
\end{definition}
Note that $\varphi$ does not vanish on $\langle y_S \cdot y_T \rangle$, so $\varphi$ does not induce a ring isomorphism between $\CC[\yy_S]/\langle y_S \cdot y_T \rangle$ and $\CC[\xx_n]$. However, it is well known that $\CC[\yy_S]/\langle y_S \cdot y_T \rangle$ has a basis given by multichain monomials and defining $\varphi$ on this basis yields a $G_n$-module isomorphism between $\CC[\yy_S]/\langle y_S \cdot y_T \rangle$ and $\CC[\xx_n]$ Here, if $y_S$ is a variable and $g \in G_n$ we define $g \cdot y_S = \alpha y_T$, where $g \cdot \prod_{i \in S} x_i = \alpha \prod_{j \in T} x_j$, and it is extended multiplicatively to $\CC[\yy_S]$.

However, even though we will show that there exist bases for $\RRR_{n,k}$ and $R_{n,k}$ (and $\SSS_{n,k}$ and $S_{n,k}$) such that the image of the $\yy$-variable basis under $\varphi$ is exactly the $\xx$-variable basis, the map $\varphi$ will not define a $G_n$-module isomorphism on these bases, not even in the case of the classical coinvariant algebra. Instead, $\varphi$ is often referred to as the \emph{transfer map}, indicating the analogy between the Stanley--Reisner quotients and the traditional $\xx$-variable quotients.

\subsection{An intermediate quotient}
Before we can prove our main results we will consider the quotient $\CC[\yy_S]$ by the ideal $\langle y_S \cdot y_T, \theta_{n-k+1},\ldots,\theta_n \rangle$.

\begin{definition}
Let $g \in G_n$ with $g = \sigma_1^{c_1} \cdots \sigma_n^{c_n}$, and let $d \in \ZZ_{\geq 0}^n$.
\begin{itemize}
\item[1.] Define $\tilde{b}_g = \prod_{i=1}^n y_{T_i}^{m_i}$, where $T_i = \{\sigma_j \ : \ 1 \leq j \leq i\}$ and
    \[
    m_i =
    \begin{cases}
    c_i - c_{i+1} + r & \qquad \textup{if } i < n \textup{ and } i \in \textup{Des}(g) ; \\
    c_i - c_{i+1} & \qquad \textup{if } i < n \textup{ and } i \not \in \textup{Des}(g) ; \\
    c_n & \qquad \textup{if } i = n.
    \end{cases}
    \]
\item[2.] Set $\tilde{b}_{(g,d)} = \tilde{b}_g \cdot \prod_{i=1}^n y_{T_i}^{r d_i}$.
\end{itemize}
\end{definition}

As an example let $n = 5$, $r = 3$ and $g = 4^0 2^2 5^2 3^2 1^1$. Then we have descents at positions $1$ and $3$, so $\tilde{b}_g = y_{\{4\}} y_{\{2,4,5\}}^3 y_{\{2,3,4,5\}} y_{\{1,2,3,4,5\}}$.

Let $\CC[\cB_n^*] := \frac{\CC[\yy_S]}{\langle y_S \cdot y_T \rangle}$ (where $S \not \subseteq T$ and $T \not \subseteq S$) be the \emph{Stanley--Reisner ring} of the Boolean algebra. It is easy to see that $\CC[\cB_n^*]$ has a $\CC$-basis given by multichain monomials, which are monomials of the form $y = y_{S_1} \cdots y_{S_t}$ with $\emptyset \neq S_1 \subseteq S_2 \subseteq \ldots \subseteq S_t \subseteq [n]$.

\begin{lemma}
Every multichain monomial $y = y_{S_1} \cdots y_{S_t}$ is equal to $\tilde{b}_{(g,d)}$ for a unique $(g,d) \in G_n \times \ZZ_{\geq 0}^n$.
\end{lemma}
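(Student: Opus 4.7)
The plan is to build an inverse to the map $(g,d) \mapsto \tilde{b}_{(g,d)}$ by reading a unique permutation, color sequence, and multi-exponent off any given multichain monomial. Writing such a monomial as $y = y_{U_1}^{e_1} \cdots y_{U_s}^{e_s}$ with $U_1 \subsetneq \cdots \subsetneq U_s$ and $e_j \geq 1$, I want to produce $g = \sigma_1^{c_1} \cdots \sigma_n^{c_n}$ and $d \in \ZZ_{\geq 0}^n$ with $\tilde{b}_{(g,d)} = y$ and argue that no other choice works.

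By construction $\tilde{b}_{(g,d)} = \prod_{i=1}^n y_{T_i}^{m_i + r d_i}$ is supported on the maximal chain $T_1 \subsetneq \cdots \subsetneq T_n = [n]$ given by $T_i = \{\sigma_1,\ldots,\sigma_i\}$. If $\tilde{b}_{(g,d)} = y$, then at every \emph{gap} position $i$ with $T_i \notin \{U_1,\ldots,U_s\}$ we must have $m_i + r d_i = 0$, forcing $m_i = d_i = 0$. Since $m_i \geq 1$ whenever $i < n$ is a descent of $g$, this rules out descents at gap positions and also forces $c_i = c_{i+1}$ there (and $c_n = 0$ if $n$ is itself a gap). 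Consequently $\sigma_i < \sigma_{i+1}$ at every gap position $i < n$, which pins $\sigma$ down completely: setting $U_0 = \emptyset$ and $U_{s+1} = [n]$, the elements of each $U_{j+1} \setminus U_j$ must appear in increasing order at positions $|U_j|+1,\ldots,|U_{j+1}|$, and the colors across the resulting block are forced to share a common value.

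With $\sigma$ fixed, I would sweep the boundary positions $i^{*}_{j} := |U_j|$ from right to left to recover the remaining data. In the trailing gap (if $i^{*}_{s} < n$) all colors and all $d_i$ are $0$. At a boundary $i = i^{*}_{j}$ one has to solve $e_j = m_i + r d_i$ with $m_i \in \{0,\ldots,r\}$, $d_i \geq 0$, and $c_{i+1}$ already inherited from the gap on the right. When $r \nmid e_j$ the residue forces $m_i = e_j \bmod r \in \{1,\ldots,r-1\}$ and $d_i = \lfloor e_j/r \rfloor$; then the requirement $c_i \in \{0,\ldots,r-1\}$ selects exactly one of the two candidates $c_{i+1} + m_i$ and $c_{i+1} + m_i - r$, with the first certifying an ascent (since $c_i > c_{i+1}$) and the second a descent (since $c_i < c_{i+1}$), each consistent with the corresponding formula for $m_i$.

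The hard part will be the case $r \mid e_j$, where a priori $m_i \in \{0, r\}$ and both alternatives yield $c_i = c_{i+1}$, so $e_j$ on its own does not distinguish them. Here the combinatorial data from the first step must break the tie: $\sigma_{i^{*}_{j}}$ is the largest element of $U_j \setminus U_{j-1}$ while $\sigma_{i^{*}_{j}+1}$ is the smallest element of $U_{j+1} \setminus U_j$, and since these lie in disjoint sets they never coincide. Their comparison dictates whether $i^{*}_{j}$ is a descent, which in turn forces $m_i = r$ (with $d_i = e_j/r - 1 \geq 0$) or $m_i = 0$ (with $d_i = e_j/r$). Once every $c_i$ and every $d_i$ has been produced by this right-to-left sweep, $\tilde{b}_{(g,d)} = y$ follows by a position-by-position check. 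The main obstacle is this tie-breaking step: verifying in every boundary case that the descent/ascent choice forced by the $\sigma$-comparison is simultaneously consistent with the color equation for $m_i$ and with the non-negativity of $d_i$.
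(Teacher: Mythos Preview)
Your proposal is correct and follows essentially the same route as the paper: determine the underlying permutation by forcing increasing order (and constant color) within each block $U_{j+1}\setminus U_j$, then recover the colors by a right-to-left induction using $c_i - c_{i+1} \equiv e_j \pmod r$, and finally read off each $d_i$ from the leftover exponent. Your explicit case split on $r \mid e_j$ versus $r \nmid e_j$ and the tie-breaking via the comparison of $\max(U_j\setminus U_{j-1})$ with $\min(U_{j+1}\setminus U_j)$ is exactly what the paper does implicitly; the only point you should make explicit is that $r\mid e_j$ together with $e_j\ge 1$ gives $e_j\ge r$, so $d_i = e_j/r - 1 \ge 0$ in the descent case.
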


Before we give the proof, let us illustrate the idea of the proof. Let $n = 6$, $r = 3$ and consider $y = y_{\{4\}}^5 y_{\{1,3,4\}}^7 y_{\{1,2,3,4,6\}} y_{\{1,2,3,4,5,6\}}^4$. If we want to write this in the form $\tilde{b}_{(g,d)}$ the underlying permutation of $g$ has to be $4 ab cd 5$ with $\{a,b\} = \{1,3\}$ and $\{c,d\} = \{2,6\}$. Now, note that if $ab = 31$, then $y_{\{4,3\}}$ will have exponent at least $1$ in $b_g$, either because $3$ and $1$ have different colors, or because $3$ and $1$ have the same color, which implies that $g$ has a descent at the second position. Similarly, we have $cd = 26$ and hence the underlying permutation is $413265$. Now, let $c_1, \ldots, c_6$ be the colors (of $4$, $1$, $3$, $2$, $6$ and $5$). By the above, we have $c_2 = c_3$ and $c_4 = c_5$. Note that $y_{\{1,2,3,4,5,6\}}$ has exponent $c_6$ in $b_g$, hence exponent equivalent to $c_6$ modulo $3$ in $b_{(g,d)}$. Therefore, since $0 \leq c_6 \leq 2$, we need $c_6 = 1$. Equivalently, $y_{\{1,2,3,4,6\}}$ has exponent equivalent to $c_5 - c_6$ modulo $3$ in $b_g$ (it is either $c_5 - c_6$ or $c_5 - c_6 + 3$) hence we have $c_5 - c_6 \equiv 1 \mod 3$ in $b_{(g,d)}$ as well. We conclude that $c_4 = c_5 = 2$. Similarly, $c_2 = c_3 = 0$ and $c_1 = 2$, hence the only option for $g$ is $4^2 1^0 3^0 2^2 6^2 5^1$. Note that in this case $b_g = y_{\{4\}}^2 y_{\{1,3,4\}} y_{\{1,2,3,4,6\}} y_{\{1,2,3,4,5,6\}}$, so we can uniquely write $y = b_{(g,d)}$ for $d = (1,0,2,0,0,1)$.

\begin{proof}
Suppose that our multichain monomial is of the form $y = y_{S_{i_1}}^{a_1} \cdots y_{S_{i_j}}^{a_j}$, where $1 \leq i_1 < \ldots, i_j \leq n$, $|S_{i_k}| = i_k$ for $1 \leq k \leq j$ and $a_1, \ldots, a_j > 0$. Let $S_{i_1} = \{g_1 < \ldots < g_{i_1}\}$, $S_{i_m} \backslash S_{i_{m-1}} = \{g_{i_{m-1}+1} < \ldots < g_{i_m}\}$ for $2 \leq m \leq j$ and $[n] \backslash S_{i_j} = \{g_{i_j+1} < \ldots < g_n\}$ (if this set is non-empty). Note that if $y = \tilde{b}_{(g,d)}$ then the one-line notation of the underlying permutation of $g$ has to be $g_1 g_2 \ldots g_n$ and all elements that are in the same set (from $\{g_1 < \ldots < g_{i_1}\}$, $\{g_{i_{m-1}+1} < \ldots < g_{i_m}\}$ and $\{g_{i_j+1} < \ldots < g_n\}$) need to have the same color. Let these colors be $c_1, \ldots, c_{i_j}$ and $c_{i_j+1}$ (the last one appearing only if necessary). Indeed, from the definition, if $h \in G_n$ and $h_i$ and $h_{i+1}$ have different colors then $y_{\{h_1,\ldots,h_i\}}$ has exponent $c_i-c_{i+1}$ or $c_i-c_{i+1}+r$ (depending on whether there is a descent or not) and in both cases this exponent is nonzero, so $\tilde{b}_{(h,d)}$ does not equal $y$. And if $h_i$ and $h_{i+1}$ have the same color, but $h_i > h_{i+1}$, then $y_{\{h_1,\ldots,h_i\}}$ would appear with exponent $r > 0$, so again this cannot happen. Therefore, the underlying permutation of $g$ is uniquely determined (if it exists). On the other hand, if such $c_1$ up to $c_{j}$ (and possibly $c_{j+1}$) exist, they are also uniquely determined, by a backwards inductive argument. Indeed, if $S_{i_j} \neq [n]$, then $Y_{[n]}$ has coefficient $0$ modulo $r$, hence we need $c_{j+1} = 0$, and else $S_{i_j} = [n]$ and $c_{j}$ has to be the exponent of $S_{i_j}$ taken modulo $n$. Now, suppose $c_{k}$ has been determined, then we will determine $c_{k-1}$. It is clear that we need $c_{k-1} - c_k \equiv a_{k-1} \mod r$, and since $c_{k-1}$ has to be taken from $\{0,\ldots,r-1\}$ this gives a unique choice. Now, for this choice of the colors, and the corresponding $g$, we show that there is a suitable $d \in \ZZ_{\geq 0}^n$. Note that by construction, $\tilde{b}_{g} = y_{S_{i_1}}^{b_1} \cdots y_{S_{i_j}}^{b_j}$, where $b_i \equiv a_i \mod r$. Furthermore, $b_i \in \{0,1,\ldots,r\}$. It is clear that we can get $d$ by taking $d_m = 0$ when $m \neq i_t$ and taking $d_{i_m} = (b_m-a_m)/r$ when $m \in \{1,\ldots,j\}$. Note that this is an integer by $b_m \equiv a_m \mod r$. Furthermore, it is nonnegative, since $a_m > 0$, $b_m \geq 0$, $a_m \equiv b_m \mod r$ and $b_m \in \{0,1,\ldots,r\}$ implies that $b_m \geq a_m$.
\end{proof}

Using this we can find a different basis for $\CC[\cB_n^*]$.
\begin{definition}
Let $g \in G_n$ and $d \in \ZZ_{\geq 0}$. Define
\[
\tilde{b}'_{(g,d)} = \theta_{n-k+1}^{d_{n-k+1}} \cdots \theta_n^{d_n} \tilde{b}_{(g,(d_1,\ldots,d_{n-k},0,\ldots,0))}.
\]
\end{definition}

\begin{lemma}
\label{free-basis}
The set $\{ \tilde{b}'_{(g,d)} \ : \ g \in G_n, d \in \ZZ_{\geq 0} \}$ is a $\CC$-basis for $\CC[\cB_n^*]$.
\end{lemma}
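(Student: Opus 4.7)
The plan is a triangularity argument with respect to the graded lexicographic monomial order on $\CC[\yy_S]$ fixed at the end of Section~\ref{Background}. By the previous lemma, $(g,d) \mapsto \tilde{b}_{(g,d)}$ is a bijection between $G_n \times \ZZ_{\geq 0}^n$ and the multichain monomials of $\CC[\cB_n^*]$, which themselves form a $\CC$-basis. Both $\tilde{b}_{(g,d)}$ and $\tilde{b}'_{(g,d)}$ have total degree $c_1 + r|\Des(g)| + r|d|$, so the indexed families $\{\tilde{b}_{(g,d)}\}$ and $\{\tilde{b}'_{(g,d)}\}$ have equal cardinality in each (finite-dimensional) graded piece of $\CC[\cB_n^*]$. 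It therefore suffices to prove that the leading monomial of $\tilde{b}'_{(g,d)}$ is $\tilde{b}_{(g,d)}$ with coefficient one: distinct leading terms then force linear independence, and the dimension count upgrades this to a basis.

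First I would expand. Since any two distinct subsets of $[n]$ of the same size are incomparable, cross terms in $\theta_i^{d_i} = (\sum_{|S|=i} y_S^r)^{d_i}$ vanish in $\CC[\cB_n^*]$, leaving $\theta_i^{d_i} = \sum_{|S|=i} y_S^{r d_i}$. Writing $d' = (d_1,\ldots,d_{n-k},0,\ldots,0)$, this gives
\begin{equation*}
\tilde{b}'_{(g,d)} = \sum_{(S_{n-k+1},\ldots,S_n)} \left(\prod_{i=n-k+1}^n y_{S_i}^{r d_i}\right) \tilde{b}_{(g,d')},
\end{equation*}
summed over tuples with $|S_i|=i$. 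The ``diagonal'' choice $S_i = T_i$ for all $i$ contributes $\tilde{b}_{(g,d)}$ with coefficient $1$. A term survives in $\CC[\cB_n^*]$ only if each $S_i$ with $d_i>0$ is comparable with every $T_j$ of positive exponent in $\tilde{b}_{(g,d')}$; in particular, if $T_i$ itself has positive exponent there (equivalently $m_i>0$), then equality of sizes forces $S_i = T_i$. So off-diagonal deviation is possible only at positions $i \in \{n-k+1,\ldots,n\}$ with $d_i>0$ and $m_i=0$.

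Finally I would prove lex dominance of the diagonal. Fix an off-diagonal term and let $i^*$ be the largest index with $S_{i^*} \neq T_{i^*}$. At $\yy$-variables of size strictly greater than $i^*$, the diagonal and off-diagonal monomials agree; at size $i^*$ the diagonal contributes $y_{T_{i^*}}^{r d_{i^*}}$ and the off-diagonal contributes $y_{S_{i^*}}^{r d_{i^*}}$, with all other size-$i^*$ variables having exponent $0$ in both (since $m_{i^*}=0$). It thus suffices to show $y_{T_{i^*}} > y_{S_{i^*}}$ in the variable order. Letting $j^*$ and $j^{**}$ be the closest indices below and above $i^*$ whose $T_j$ lies in the support of $\tilde{b}_{(g,d')}$ (with conventions $T_0 = \emptyset$ and $T_{n+1}=[n]$ if no such index exists), the chain constraint gives $T_{j^*}\subseteq S_{i^*}\subseteq T_{j^{**}}$. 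For every $j \in \{j^*+1,\ldots,j^{**}-1\}$ one has $m_j = 0$, which by the definition of $m_j$ forces $c_j = c_{j+1}$ and $g_j < g_{j+1}$, so $g_{j^*+1}<g_{j^*+2}<\cdots<g_{j^{**}}$ as integers. Hence $T_{i^*}\setminus T_{j^*} = \{g_{j^*+1},\ldots,g_{i^*}\}$ consists of the $i^*-j^*$ smallest elements of $T_{j^{**}}\setminus T_{j^*}$, yielding $\min(T_{i^*}\setminus S_{i^*})\leq g_{i^*} < g_{i^*+1}\leq \min(S_{i^*}\setminus T_{i^*})$, i.e.\ $y_{T_{i^*}} > y_{S_{i^*}}$. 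This finishes the triangularity argument. The main technical point is this last monotonicity observation: the $G_n$-specific combinatorics of the $m_i$'s is exactly what guarantees $T_{i^*}$ is the lex-minimal (hence variable-maximal) valid choice.
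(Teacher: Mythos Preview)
Your proof is correct and follows essentially the same approach as the paper's: a unitriangular expansion of $\tilde{b}'_{(g,d)}$ in the basis $\{\tilde{b}_{(g,d)}\}$ with respect to the chosen monomial order, the key combinatorial point being that whenever $m_j=0$ on a stretch of indices the letters $g_j$ are increasing, which forces $T_{i^*}$ to be the lex-maximal admissible choice. Your write-up is in fact a bit more explicit than the paper's at the comparison step (fixing the largest deviation index $i^*$ and isolating the size-$i^*$ block), but the underlying argument is the same.
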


\begin{proof}
Order the basis $\tilde{b}_{(g,d)}$ according to the monomial order from above. Note that for each monomial $y$, the set of monomials $y'$ with $y' \leq y$ is finite, since any such monomial $y'$ must have $\deg(y') \leq \deg(y)$ and there are finitely many such monomials.

Now, if we expand $\tilde{b}'_{(g,d)}$ in terms of the basis $\{ \tilde{b}_{(g,d)} \}$ we find that
\[
\tilde{b}'_{(g,d)} = \tilde{b}_{(g,d)} + \textup{lower terms with respect to $<$}.
\]
Indeed, suppose $g$ has underlying permutation $g_1\cdots g_n$. Set $S_i = \{g_1,\ldots,g_i\}$. Note that if $g_i > g_{i+1}$, or $c_i \neq c_{i+1}$ then necessarily we have that $y_{S_i}$ occurs in $\tilde{b}_g$ with a positive exponent. Note that (since we only allow multichains), we have $\theta_a^b = \sum_{|S|=a} y_S^{rb}$. Now, terms in $\tilde{b}'_{(g,d)}$ correspond to picking one of the terms from each of the $\theta_a^b$ with positive $b$, in such a way that the result is still a multichain. Because of our monomial order, we should pick from larger $a$ first. Suppose we are picking a subset of size $i$ and suppose $i_t < i < i_{t+1}$ (set $i_{j+1} = n$) (we can exclude $i = i_t$, because of the multichain condition we must pick $S_i$). Then, we are asking for the largest $y_S$ with $|S| = i$ and $S_{i_t} \subseteq S \subseteq S_{i_{t+1}}$, which is $S = \{g_1,\ldots,g_i\}$, due to the fact that $g_{i_t+1}, \ldots, g_{i_{t+1}}$ all have the same color and are in increasing order, by the proof of the lemma above. Therefore, the largest possible monomial that could possibly appear is obtained by picking $y_{S_i}$ for every $i > n-k$ with $d_i > 0$. Now note that if we take this choice for all $i$ simultaneously we indeed get a multichain monomial, and this monomial is equal to $\tilde{b}_{(g,d)}$, as desired.

Therefore, $\tilde{b}'_{(g,d)}$ expands in a unitriangular way in terms of the basis $\{ \tilde{b}_{(g,d)} \}$ and because of the initial observation in this proof, it follows that $\{ \tilde{b}'_{(g,d)} \ : \ g \in G_n, d \in \ZZ_{\geq 0} \}$ is a basis for $\CC[\cB_n^*]$.
\end{proof}

For $(d_1,\ldots,d_{n-k}) = d \in \ZZ_{\geq 0}^{n-k}$ set $\tilde{b}_{(g,d)} = \tilde{b}_{(g,(d_1,\ldots,d_{n-k},0,\ldots,0))}$. Then the following is immediate.

\begin{corollary} \label{freeness}
$\CC[\cB_n^*]$ is a free $\CC[\theta_{n-k+1},\ldots,\theta_n]$-module with basis given by
\[
\{\tilde{b}_{(g,d)} \ : \ g \in G_n, d \in \ZZ_{\geq 0}^{n-k}\}.
\]
Furthermore, this set descends to a vector space basis for $\CC[\cB_n^*]/\langle \theta_{n-k+1},\ldots,\theta_n \rangle = $ \\ $\CC[\yy_S]/\langle y_S \cdot y_T, \theta_{n-k+1},\ldots,\theta_n \rangle$.
\end{corollary}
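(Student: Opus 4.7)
The plan is to derive Corollary \ref{freeness} as an immediate bookkeeping consequence of Lemma \ref{free-basis}. I would begin by splitting each exponent vector $d \in \ZZ_{\geq 0}^n$ appearing in Lemma \ref{free-basis} as $d = (d', d'')$ with $d' = (d_1,\ldots,d_{n-k}) \in \ZZ_{\geq 0}^{n-k}$ and $d'' = (d_{n-k+1},\ldots,d_n) \in \ZZ_{\geq 0}^k$. By the definition of $\tilde{b}'_{(g,d)}$ we have $\tilde{b}'_{(g,d)} = \theta_{n-k+1}^{d_{n-k+1}} \cdots \theta_n^{d_n} \, \tilde{b}_{(g,d')}$, so the $\CC$-basis furnished by Lemma \ref{free-basis} is precisely the set of all products of a monomial in $\theta_{n-k+1},\ldots,\theta_n$ with a single element of $\{\tilde{b}_{(g,d')} : g \in G_n,\, d' \in \ZZ_{\geq 0}^{n-k}\}$.

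To prove freeness I would take any $f \in \CC[\cB_n^*]$ and collect its unique expansion in $\{\tilde{b}'_{(g,d)}\}$ according to the pair $(g,d')$, obtaining $f = \sum_{g,d'} p_{g,d'}(\theta_{n-k+1},\ldots,\theta_n) \, \tilde{b}_{(g,d')}$ with each $p_{g,d'} \in \CC[\theta_{n-k+1},\ldots,\theta_n]$. Uniqueness of the $\CC$-expansion translates directly into uniqueness of the polynomials $p_{g,d'}$, which is precisely the statement that $\{\tilde{b}_{(g,d')}\}$ is a free $\CC[\theta_{n-k+1},\ldots,\theta_n]$-module basis. The same uniqueness applied to the element $1 = \tilde{b}_{(\mathrm{id},0)}$ also shows that $\theta_{n-k+1},\ldots,\theta_n$ are algebraically independent in $\CC[\cB_n^*]$, since any nontrivial relation $F(\theta_{n-k+1},\ldots,\theta_n) = 0$ would yield a nontrivial basis expansion of $0$.

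For the final claim, passing to the quotient by $\langle \theta_{n-k+1},\ldots,\theta_n\rangle$ amounts to setting these $\theta_i$ to zero in every unique expansion, leaving the image of $f$ as $\sum_{g,d'} p_{g,d'}(0,\ldots,0) \, \tilde{b}_{(g,d')}$. Spanning is immediate, and linear independence of the images follows because the kernel of the quotient map consists of those elements whose $\CC[\theta_{n-k+1},\ldots,\theta_n]$-coefficients have vanishing constant term, which is exactly the content of freeness established in the previous step. The identification $\CC[\cB_n^*]/\langle \theta_{n-k+1},\ldots,\theta_n\rangle = \CC[\yy_S]/\langle y_S \cdot y_T, \theta_{n-k+1},\ldots,\theta_n\rangle$ is then tautological from the definition of $\CC[\cB_n^*]$.

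There is essentially no obstacle beyond careful bookkeeping; all the substantive work, especially the unitriangular comparison between $\tilde{b}_{(g,d)}$ and $\tilde{b}'_{(g,d)}$ and the verification that the latter is a $\CC$-basis, is already packaged into Lemma \ref{free-basis}.
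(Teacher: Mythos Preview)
Your proposal is correct and matches the paper's approach: the paper states the corollary as immediate from Lemma~\ref{free-basis} without further proof, and your argument is precisely the routine bookkeeping that unpacks ``immediate'' by splitting each exponent vector $d\in\ZZ_{\geq 0}^n$ into its first $n-k$ and last $k$ coordinates.
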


Additionally, this allows us to quickly determine a vector space basis for the quotient $\CC[\yy_S]/\langle y_S \cdot y_T, \theta_{n-k+1},\ldots,\theta_n,y_{S_1,\ldots,S_m} \rangle$, of which we will be interested in the cases $m = kr$ and $m=kr+1$. Again, the result is immediate, so the proof is omitted.
\begin{corollary} \label{multichainbases}
Let $m \in \mathbb{Z}_{>0}$ and consider $\CC[\yy_S]/\langle y_S \cdot y_T, \theta_{n-k+1},\ldots,\theta_n, y_{S_1} \cdots y_{S_m} \rangle$, where $(S,T)$ runs over all pairs with $S \not \subseteq T$ and $T \not \subseteq S$, and $(S_1, \ldots, S_m)$ runs over all $\emptyset \neq S_1 \subseteq \ldots \subseteq S_m \subseteq [n]$. This is a finite-dimensional $\CC$-vector space with basis given by all elements $\tilde{b}_{(g,d)}$ with $g \in G_n$, $d \in \ZZ_{\geq 0}^{n-k}$ and $\deg(\tilde{b}_{(g,d)}) < m$.
\end{corollary}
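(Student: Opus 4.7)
The plan is to apply Corollary~\ref{freeness} together with the fact that each basis element $\tilde{b}_{(g,d)}$ is itself a multichain monomial in $\CC[\yy_S]$, so the further multichain relation simply truncates the basis by $y$-degree.

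First, I would set $A := \CC[\yy_S]/\langle y_S y_T,\ \theta_{n-k+1}, \ldots, \theta_n\rangle$ and let $J \subseteq A$ be the image of the ideal $\langle y_{S_1} \cdots y_{S_m}\rangle$, so that the quotient in the statement equals $A/J$. Corollary~\ref{freeness} provides $\{\tilde{b}_{(g,d)} : g \in G_n,\ d \in \ZZ_{\geq 0}^{n-k}\}$ as a $\CC$-basis of $A$. I would equip $\CC[\yy_S]$ with the standard grading in which each $y_S$ has degree $1$, so that $\deg(\tilde{b}_{(g,d)})$ is the total $y$-exponent sum and the defining relations $y_S y_T$, $\theta_i$, and $y_{S_1} \cdots y_{S_m}$ are homogeneous of degrees $2$, $r$, and $m$, respectively. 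Hence $A$ and $J$ are graded, and $J$ is supported in degrees $\geq m$.

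Since $J$ vanishes in degrees below $m$, the basis elements $\tilde{b}_{(g,d)}$ with $\deg(\tilde{b}_{(g,d)}) < m$ remain linearly independent in $A/J$. It therefore suffices to show that $\tilde{b}_{(g,d)} \in J$ whenever $\deg(\tilde{b}_{(g,d)}) \geq m$. The sets $T_1 \subseteq T_2 \subseteq \cdots \subseteq T_n$ appearing in the definition of $\tilde{b}_g$ form a chain, so the natural lift of $\tilde{b}_{(g,d)}$ to $\CC[\yy_S]$ is itself a multichain monomial $y_{U_1} y_{U_2} \cdots y_{U_N}$ with $U_1 \subseteq \cdots \subseteq U_N$ and $N = \deg(\tilde{b}_{(g,d)})$. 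If $N \geq m$, then factoring off the initial segment $y_{U_1} \cdots y_{U_m}$ (which is a generator of $\langle y_{S_1} \cdots y_{S_m}\rangle$) exhibits $\tilde{b}_{(g,d)}$ as an element of that ideal; hence $\tilde{b}_{(g,d)} \in J$. Combining these two observations yields the claimed basis.

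The only delicate point is the multichain observation in the previous paragraph, but it is immediate from the construction of $\tilde{b}_g$ as a product of $y_{T_i}$ indexed by nested prefixes of the one-line notation of $g$, together with the extra $y_{T_i}^{r d_i}$ factors coming from $d$. Everything else reduces to the standard fact that a graded ideal supported in degrees $\geq m$ meets the submodule of elements of degree $< m$ trivially, which is exactly why the authors remark that the result is immediate.
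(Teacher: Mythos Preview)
Your argument is correct and is precisely the reasoning the paper has in mind when it says the result is immediate (the paper omits the proof entirely). You correctly invoke Corollary~\ref{freeness} for the basis of $A$, use the homogeneous grading to see that the multichain ideal lives in degrees $\geq m$, and then observe that each $\tilde{b}_{(g,d)}$ is itself a multichain monomial so that those of degree $\geq m$ lie in $J$; nothing more is needed.
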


\subsection{Bases for the rings $\RRR_{n,k}$ and $\SSS_{n,k}$}
Note that Corollary \ref{multichainbases} yields bases for $\RRR_{n,k}$ and $\SSS_{n,k}$. In this section we will show that these bases can be indexed by elements of $\FFF_{n,k}$ and $\OP_{n,k}$ respectively. We will use the models introduced before.
\begin{definition}
\begin{itemize}
\item[1.] For $(g,\lambda) \in \OP_{n,k}$, let $\tilde{b}_{(g,\lambda)} = \tilde{b}_{g} \cdot y_{S_1}^r \cdots y_{S_t}^r$, where $S_i = \{g_i : 1 \leq j \leq \lambda_i\}$.
\item[2.] Let $(Z,g,\lambda) \in \FFF_{n,k}$. If (loosely extending the definition above) we have $\tilde{b}_{(g,\lambda)} = y_{S_1} \cdots y_{S_j}$, then set $\tilde{b}'_{(g,\lambda)} = y_{S_1 \cup Z} \cdots y_{S_j \cup Z}$. Now, set $\tilde{b}_{(Z,g,\lambda)} = y_Z^{kr-\deg(\tilde{b}_{(g,\lambda)})} \cdot \tilde{b}'_{(g,\lambda)}$.
\end{itemize}
\end{definition}

It is an easy check that $\varphi(\tilde{b}_{(g,\lambda)}) = b_{(g,\lambda)}$ and $\varphi(\tilde{b}_{(Z,g,\lambda)}) = b_{(Z,g,\lambda)}$. The main result is now the following.
\begin{theorem}
The sets $\{\tilde{b}_{(g,\lambda)} \ : \ (g,\lambda) \in \OP_{n,k}\}$ and $\{\tilde{b}_{(Z,g,\lambda)} \ : \ (Z,g,\lambda) \}$ are bases for $\SSS_{n,k}$ and $\RRR_{n,k}$ respectively.
\end{theorem}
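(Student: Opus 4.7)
The overall strategy is to show that the sets $\{\tilde{b}_{(g,\lambda)} : (g,\lambda) \in \OP_{n,k}\}$ and $\{\tilde{b}_{(Z,g,\lambda)} : (Z,g,\lambda) \in \FFF_{n,k}\}$ coincide, as subsets of $\CC[\yy_S]$, with the bases provided by Corollary \ref{multichainbases}. Concretely, I will exhibit each of our putative basis elements as $\tilde{b}_{(g',d)}$ for a unique $(g',d)$ in the Corollary parameter set, and then check that the resulting indexing maps are bijections onto that parameter set.

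For $\SSS_{n,k}$: Unpacking definitions gives $\tilde{b}_{(g,\lambda)} = \tilde{b}_g \cdot \prod_{\lambda_\ell > 0} y_{T_{\lambda_\ell}}^r = \tilde{b}_{(g, d(\lambda))}$, where $d(\lambda) \in \ZZ_{\geq 0}^{n-k}$ records the multiplicities $d(\lambda)_i = \#\{\ell : \lambda_\ell = i\}$ (zero parts contribute nothing). The formula $\deg(\tilde{b}_{(g,d)}) = r\des(g) + c_1 + r\sum d_i$, combined with $c_1 \in \{0,\ldots,r-1\}$, shows that $\deg < kr$ is equivalent to $\des(g) + \sum d_i \leq k - 1$. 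This matches exactly the constraint defining $\OP_{n,k}$, namely that $\lambda$ has length $k - \des(g) - 1$ (possibly with zero parts) and parts bounded by $n-k$. The inverse map---padding $\lambda$ with zero parts up to length $k - \des(g) - 1$---gives the bijection.

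For $\RRR_{n,k}$: By an analogous but more involved calculation, $\tilde{b}_{(Z,g,\lambda)} = \tilde{b}_{(g^*, d^*)}$, where $g^* \in G_n$ is obtained from $g$ by prepending the elements of $Z$ in increasing order (each colored $0$), and $d^* \in \ZZ_{\geq 0}^{n-k}$ satisfies $d^*_j = 0$ for $j < |Z|$, $d^*_{|Z|} = k - \des(g) - t - \epsilon$ (with $t$ the number of nonzero parts of $\lambda$ and $\epsilon \in \{0,1\}$ the indicator of a descent at position $|Z|$ in $g^*$), and $d^*_{|Z|+j} = \#\{\ell : \lambda_\ell = j\}$ for $j \geq 1$. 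The identity follows by comparing $\tilde{b}_{g^*}$ with $\prod_i y_{T_i \cup Z}^{m_i}$: they differ only by a factor $y_Z^{r\epsilon - c_1}$ at position $|Z|$, which is absorbed into the $y_Z^{kr - \deg(\tilde{b}_{(g,\lambda)})}$ prefactor. Using $\des(g^*) = \des(g) + \epsilon$, one verifies $\deg(\tilde{b}_{(g^*,d^*)}) = kr$ when $|Z| > 0$ and $\leq kr - 1$ when $|Z| = 0$, so every image lies in the Corollary parameter set for $\RRR_{n,k}$.

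The main obstacle is inverting this map---that is, recovering $|Z|$ from $(g^*, d^*)$. If $\deg(\tilde{b}_{(g^*,d^*)}) \leq kr - 1$, then necessarily $|Z| = 0$, reducing to the $\SSS$-case analysis. Otherwise $\deg = kr$, which forces $c^*_1 = 0$, and I will show $|Z| = \min(z_{\min}, z_{\max})$, where $z_{\min}$ is the smallest index with $d^*_{z_{\min}} > 0$ (or $\infty$ if $d^* = 0$) and $z_{\max}$ is the length of the longest strictly increasing color-$0$ prefix of $g^*$. The crucial observation is that the letter $g^*_{z_{\max}+1}$, which either has positive color or is a color-$0$ letter strictly smaller than $g^*_{z_{\max}}$, always produces a descent at position $z_{\max}$ in $g^*$; this matches the forward-map constraint that whenever $d^*_{|Z|} = 0$ we must have $\epsilon = 1$. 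Together with $|Z| \leq z_{\min}$ (forced by $d^*_j = 0$ for $j < |Z|$), this pins down $|Z|$ uniquely, and $Z$, $g$, $\lambda$ are then read off directly from $g^*$ and $d^*$.
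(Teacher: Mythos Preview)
Your proposal is correct and follows essentially the same approach as the paper: both reduce the theorem to Corollary~\ref{multichainbases} by exhibiting a bijection between the $\OP_{n,k}$/$\FFF_{n,k}$ indexing set and the parameter set $\{(g',d) : \deg(\tilde{b}_{(g',d)}) < kr\}$ (respectively $\leq kr$). Your inverse formula $|Z| = \min(z_{\min}, z_{\max})$ is exactly the paper's ``size of the smallest subset with positive exponent in $\tilde{b}_{(g',d)}$'' unpacked into the $(g',d)$ coordinates, since $y_{T_i}$ has positive exponent precisely when $m_i > 0$ (equivalently $i \geq z_{\max}$) or $d_i > 0$ (equivalently $i \geq z_{\min}$).
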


\begin{proof}
Let us first show that there is a bijection between elements of the form $\tilde{b}_{(g,\lambda)}$ and $\tilde{b}_{(g,d)}$ with $\deg(\tilde{b}_{(g,d)}) < kr$. Note that for any partition $\lambda$ with parts at most $n-k$, we have (after extending the above definition to allow for any partition) $\tilde{b}_{(g,\lambda)} = \tilde{b}_{(g,d)}$, where $d = (d_1,\ldots,d_{n-k})$ with $d_i = \# \{j \ : \ \lambda_j = i\}$. Therefore, it suffices to show that $\lambda$ has at most $k-\des(g)-1$ parts if and only if $\deg(\tilde{b}_{(g,\lambda)}) < kr$. Now, note that if $\lambda$ has $m$ parts, we have
\begin{align*}
\deg(\tilde{b}_{(g,\lambda)}) &= \deg(\tilde{b}_g) + mr = \sum_{i=1}^{n-1} (c_i - c_{i+1} + r \cdot \chi(\textup{$i$ is a descent})) + c_n + mr \\
    &= c_1 + r \des(g) + mr = c_1 + (m + \des(g)) r,
\end{align*}
where $\chi$ is the indicator function given by $\chi(S) = 1$ if statement $S$ is true and $\chi(S) = 0$ otherwise. Now, since $c_1 \in \{0,1,\ldots,r-1\}$ we have $\deg(\tilde{b}_{(g,\lambda)}) < kr$ if and only if $m+\des(g)\leq k-1$, that is if and only if $\lambda$ has at most $k-\des(g)-1$ parts.

Similarly, we have to show that there is a bijection between elements of the form $\tilde{b}_{(Z,g,\lambda)}$ and $\tilde{b}_{(g,d)}$ with $\deg(\tilde{b}_{(g,d)}) \leq kr$. A similar calculation to above shows that $\deg(\tilde{b}_{(Z,g,\lambda)}) < kr$ if $Z = \emptyset$ and clearly $\deg(\tilde{b}_{(Z,g,\lambda)} = kr$ when $Z \neq \emptyset$, so it suffices to show that there is a bijection between elements of the form $\tilde{b}_{(Z,g,\lambda)}$ with $Z \neq \emptyset$ and $\tilde{b}_{(g,d)}$ with $\deg(\tilde{b}_{(g,d)}) = kr$. Note that $\deg(\tilde{b}_{(g,d)}) = c_1 + r(\des(g) + d_1 + \ldots + d_{n-k})$, so $\deg(\tilde{b}_{(g,d)}) = kr$ if and only if $c_1 = 0$ and $\des(g) + d_1 + \ldots + d_{n-k} = k$.

Now, given $\tilde{b}_{(g,d)}$ with $\deg(\tilde{b}_{(g,d)}) = kr$, we show that there is a unique $(Z,h,\lambda)$ such that $\tilde{b}_{(Z,h,\lambda)} = b_{(g,d)}$. Let $S$ be the smallest subset (in size) such that $y_S$ has positive exponent in $\tilde{b}_{(g,\lambda)}$. It is clear that if $(Z,h,\lambda)$ exists we must have $Z = S$. Now, suppose that $|S| > n-k$. Then in particular we have $d_1 = \ldots = d_{n-k} = 0$, and $g$ has no descents at positions $1, \ldots, n-k$. But then, using $c_1 = 0$, we have $\deg(\tilde{b}_{g,d}) = \deg(\tilde{b}_g) = c_1 + r \des(g) = r \des(g) < r (k-1)$, a contradiction. Therefore, let $z = |S|$, so that $1 \leq z \leq n-k$. Using $c_1 = 0$ and minimality of $S$, we see that $g = g_1^0 \cdots g_z^0 \cdots$ with $g_1 < \ldots < g_z$. Additionally, $d_1 = \ldots = d_{z-1} = 0$. Set $b = \tilde{b}_{(g,d)} / y_S^e$, where $e$ is the exponent of $y_S$, and write
\[
b = \prod_{i=1}^m y_{S \cup S_i},
\]
where $\emptyset \neq S_1 \subseteq \ldots \subseteq S_m \subseteq [n] \backslash S$. Note that $\prod_{i=1}^m y_{S_i} = \tilde{b}_{(h,d)}$ for $h = g_{z+1}^{c_{z+1}} \cdots g_n^{c_n}$ and $d = (d_{z+1},\ldots,d_{n-k})$. We now want to show that there is a unique $(h,\lambda)$ such that $(Z,h,\lambda) \in \FFF_{n,k}$ and $\tilde{b}_{(h,\lambda)} = \tilde{b}_{(h,d)}$. However, since $\deg(\tilde{b}_{(h,d)}) < kr$ the first part of the proof shows that indeed we can find such a $(h,\lambda)$.

Conversely, we show that $\tilde{b}_{(Z,h,\lambda)}$ is of the form $\tilde{b}_{(g,d)}$ for a unique $(g,d)$. Write $Z = \{g_1 < \ldots < g_z\}$ and let $h = h_1^{c_1} \cdots h_{n-z}^{c_{n-z}}$. It is clear that we must have $g = g_1^c \cdots g_z^c h_1^{c_1} \cdots h_{n-z}^{c_{n-z}}$ for a suitable $c$. Furthermore, since we need $\deg(\tilde{b}_g) \equiv 0 \mod r$, we in fact have to pick $c = 0$. Therefore, $g$ is uniquely determined, and hence $d$ (if it exists) is also uniquely determined. By construction, if $S_t = \{g_1,\ldots,g_t\}$, the exponent of $y_{S_t}$ in $\tilde{b}_g$ and in $\tilde{b}_{(Z,h,\lambda)}$ agree modulo $r$. Indeed, this is obvious for $t > z$, and for $t \leq z$ the choice of $c = 0$ guarantees this. Furthermore, for $t > n-k$ we still have that the exponents agree as integers (so not only modulo $r$). Therefore, it only suffices to show that for any $1 \leq t \leq n-k$ the exponent of $y_{S_t}$ in $\tilde{b}_{(Z,h,\lambda)}$ is at least the exponent of $y_{S_t}$ in $\tilde{b}_g$. Again, this is obvious for $z < t \leq n-k$. Additionally, it is clear for $1 \leq t < z$, since by construction $y_{S_t}$ has exponent $0$ in $\tilde{b}_g$. Now, for $t = z$, we are immediately okay if $y_{S_z} = y_Z$ occurs with exponent less $\{0,1,\ldots,r-1\}$ in $\tilde{b}_g$. Therefore, the only thing that might fail is that $y_Z$ occurs with exponent $r$ in $\tilde{b}_g$ but exponent $0$ in $\tilde{b}_{(Z,h,\lambda)}$. However, since $\deg(\tilde{b}_{(h,\lambda)}) < kr$, we know that $y_Z$ occurs with exponent at least $1$ in $\tilde{b}_{(Z,h,\lambda)}$ and therefore, with exponent at least $r$, as desired.
\end{proof}

\subsection{A Gr\"obner theory result}
In this section we will show that the above bases are actually the standard monomial bases with respect to the monomial order used.

\begin{theorem}
\label{Grobnertheorem1}
Let $0 \leq k \leq n$ be integers with $n \geq 1$. Then the set $\{\tilde{b}_{(g,\lambda)} \ : \ (g,\lambda) \in \OP_{n,k}\}$ is precisely the standard monomial basis for $\SSS_{n,k}$.
\end{theorem}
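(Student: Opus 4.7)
The plan is to prove the stronger unitriangularity statement: for every monomial $m \in \CC[\yy_S]$, its image in $\SSS_{n,k}$ expands as a $\CC$-linear combination of basis elements $\tilde{b}_{(g',\lambda')} \in T := \{\tilde{b}_{(g,\lambda)} : (g,\lambda) \in \OP_{n,k}\}$ with $\tilde{b}_{(g',\lambda')} \leq m$. Granting this, suppose some $\tilde{b}_{(g,\lambda)}$ equalled $LM(f)$ for $f \in \JJJ_{n,k}$; then in $\SSS_{n,k}$ it would equal a $\CC$-combination of strictly smaller monomials, whose basis expansions involve only basis elements strictly below $\tilde{b}_{(g,\lambda)}$, contradicting uniqueness of the basis expansion. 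Since the preceding theorem provides $T$ as a basis and the standard monomials also form a basis of $\SSS_{n,k}$, the containment $T \subseteq \{\text{standard monomials}\}$ combined with equal cardinality forces equality.

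I would prove the unitriangularity claim by induction on the monomial order. The easy cases are: $m$ is not multichain, or $m$ is a multichain of degree $\geq kr$ (both give $m \equiv 0$); and $m$ is a multichain of degree $< kr$ divisible by $y_{[i]}^r$ for some $i \geq n-k+1$, in which case $\theta_i \equiv 0$ lets me replace $y_{[i]}^r$ by $-\sum_{|S|=i,\, S \neq [i]} y_S^r$ to produce strictly smaller monomials whose expansions are supplied by the inductive hypothesis. The essential case is when $m$ is a multichain of degree $< kr$ not divisible by any such $y_{[i]}^r$. Writing the unique factorization $m = \tilde{b}_{(g,d)}$ with $d \in \ZZ_{\geq 0}^n$: if $d_\ell = 0$ for every $\ell > n-k$ then $m \in T$ and the expansion is $m$ itself; otherwise I let $j$ be the smallest index $> n-k$ with $d_j > 0$ and invoke the derived relation $(m/y_{T_j}^r) \cdot \theta_j \equiv 0$.

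The main obstacle is showing that this derived relation has $m$ as its leading monomial, i.e.\ that $T_j$ is lex-maximal among all $S \subseteq [n]$ of size $j$ with $T_{\ell^-} \subseteq S \subseteq T_{\ell^+}$, where $\ell^\pm$ are the nearest indices on either side of $j$ whose prefix $T_\ell$ carries positive exponent in $m$ (with $\emptyset$ and $[n]$ as defaults if missing). The key structural observation is that $m \neq 0$ in $\SSS_{n,k}$ forces $m_j + r d_j = r$ exactly, since any strictly larger exponent would keep $y_{T_j}$ present in $m/y_{T_j}^r$ and collapse the $\theta_j$-relation to $m \equiv 0$. This pins down $d_j = 1$ and $m_j = 0$, forcing $c_j = c_{j+1}$ and $g_j < g_{j+1}$; the identical analysis at each interior index $\ell \in (\ell^-, \ell^+)$ yields $c_\ell = c_{\ell+1}$ and $g_\ell < g_{\ell+1}$, producing an ascending same-color run $g_{\ell^-+1} < g_{\ell^-+2} < \cdots < g_{\ell^+}$. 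Its first $j - |T_{\ell^-}|$ entries are exactly $T_j \setminus T_{\ell^-}$, so $T_j$ realizes the required lex-maximum, and the relation rewrites $m$ as a combination of strictly smaller multichain monomials, to which the inductive hypothesis applies. The most delicate part will be handling the boundary cases where $\ell^-$ or $\ell^+$ is undefined and ruling out the fully degenerate configuration that would force $T_j = [j]$, contradicting the hypothesis of the essential case.
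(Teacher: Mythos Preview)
Your proposal is correct, and the underlying mechanism is the same as the paper's: both arguments rest on the observation that when the exponent of $y_{T_j}$ (with $j>n-k$) forces $m_j=0$, the neighboring positions between $\ell^-$ and $\ell^+$ form an ascending same-color run, which is exactly what makes $T_j$ lex-maximal among the $j$-subsets sandwiched between $T_{\ell^-}$ and $T_{\ell^+}$. The relation $(m/y_{T_j}^r)\,\theta_j\in\JJJ_{n,k}$ then has $m$ as its leading term.

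The organization differs. The paper proceeds by explicitly listing seven families of forbidden divisors, then proving (i) each $\tilde b_{(g,\lambda)}$ avoids them, (ii) any monomial avoiding them equals some $\tilde b_{(g,\lambda)}$, and (iii) each forbidden monomial is the leading term of an explicit element of $\JJJ_{n,k}$. This yields the containment \emph{standard monomials} $\subseteq T$. You instead prove unitriangularity by a single induction on the monomial order, which gives the reverse containment $T\subseteq$ \emph{standard monomials}; both finish by invoking equal cardinality. Your case split (non-multichain; degree $\ge kr$; divisible by $y_{[i]}^r$; exponent of $y_{T_j}$ exceeds $r$; the essential case) recovers the paper's conditions~1,~7,~2,~3, and~4--6 respectively, but without naming them. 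The paper's packaging has the advantage of producing an explicit combinatorial description of the standard monomials as a byproduct; your packaging is a bit more streamlined since it avoids the separate ``sufficiency'' verification and handles conditions~4--6 uniformly through the ascending-run argument rather than as three separate cases.
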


\begin{proof}
Since we know that the given set is a basis, it suffices to show that the standard monomial basis of $\SSS_{n,k}$ is contained in $\{\tilde{b}_{(g,\lambda)} \ : \ (g,\lambda) \in \OP_{n,k}\}$.

Similar to Braun and Olsen \cite{BO} we show that $y \in \{\tilde{b}_{(g,\lambda)} \ : \ (g,\lambda) \in \OP_{n,k}\}$ if and only if $y$ is not divisible by any of the monomials in the list below. The proof will then be completed by showing that each of these monomials occurs as the leading term of some element of $\JJJ_{n,k}$. The list of monomials is given by
\begin{itemize}
\item[1.] \makebox[3cm][l]{\quad$y_S \cdot y_T$} for $S \not \subseteq T$ and $T \not \subseteq S$;
\item[2.] \makebox[3cm][l]{\quad$y_{[m]}^r$} for $m \geq n-k+1$;
\item[3.] \makebox[3cm][l]{\quad$y_S^{r+1}$} for $|S| \geq n-k+1$;
\item[4.] \makebox[3cm][l]{\quad$y_S^r \cdot y_T$} for $S \subsetneq T$, $|S| \geq n-k+1$ and $\min(T \backslash S) > \max(S)$;
\item[5.] \makebox[3cm][l]{\quad$y_S \cdot y_T^r$} for $S \subsetneq T$, $|T| \geq n-k+1$ and $T = S \cup [\ell]$ for some $\ell$;
\item[6.] \makebox[3cm][l]{\quad$y_{S_1} \cdot y_{S_2}^r \cdot y_{S_3}$} for $S_1 \subsetneq S_2 \subsetneq S_3$, $|S_2| \geq n-k+1$ and $\max(S_2 \backslash S_1) < \min(S_3 \backslash S_2)$;
\item[7.] \makebox[3cm][l]{\quad$y_{S_1} \cdots y_{S_{kr}}$} where $S_1 \subseteq \ldots \subseteq S_{kr}$.
\end{itemize}
We will first show necessity of these conditions, then sufficiency and lastly will exhibit these monomials as leading terms in $\JJJ_{n,k}$.

\textbf{Necessity}: we will assume that $g$ is of the form $\pi_1^{c_1} \cdots \pi_n^{c_n}$. Note that if $y_S$ with $|S| \geq n-k+1$ occurs in some $\tilde{b}_{(g,\lambda)}$ then its contribution completely comes from $\tilde{b}_g$. Now, if there is no descent at position $|S|$, $y_S$ will have exponent $c_{|S|+1} - c_{|S|} \leq (r-1)-0 < r$. Furthermore, if there is a descent at position $|S|$, we have $c_{|S|+1} \geq c_{|S|}$, so $y_S$ will have exponent $r + c_{|S|+1} - c_{|S|} \leq r$. Therefore, if $y_S$ occurs with exponent at least $r$, it occurs with exponent exactly $r$, we have a descent at position $|S|$ and $c_{|S|+1} = c_{|S|}$.
\begin{itemize}
\item[1.] Each variable occurring in $\tilde{b}_{(g,\lambda)}$ is of the form $y_{S_i}$ for $1 \leq i \leq n$, where $S_i = \{\pi_1,\ldots,\pi_i\}$. Since $S_1 \subseteq S_2 \subseteq \ldots \subseteq S_n$, every two variables in $\tilde{b}_{(g,\lambda)}$ will automatically be indexed by subsets one of which is contained in the other.
\item[2.] Since $m \geq n-k+1$, $y_{[m]}^r$ would have to come from a descent of $g$ at position $m$ with $c_m = c_{m+1}$. In order to have a descent we need $\pi_{m+1} < \pi_m$. However, $\pi_m \in [m]$, hence $\pi_m \leq m$, whereas $\pi_{m+1} \in [n] \backslash [m]$, so $\pi_{m+1} \geq m+1$.
\item[3.] This was observed above
\item[4.] Suppose such a product $y_S^r \cdot y_T$ actually occurs. Since $|S| \geq n-k+1$, $y_S^r$ comes from a descent at position $|S|$ with $c_{|S|+1} = c_{|S|}$, so $\pi_{|S|} > \pi_{|S|+1}$. Since $\{\pi_1,\ldots,\pi_{|S|}\} = S$ and $\{\pi_1,\ldots,\pi_{|T|}\} = T$, we have $\min(T \backslash S) \leq \pi_{|S| + 1} < \pi_{|S|} \leq \max(S)$, which is an obvious contradiction.
\item[5.] Suppose that such a product occurs. Again, $y_T^r$ has to come from a descent at position $|T|$ with $c_{|T|} = c_{|T|+1}$, hence $\pi_{|T|} > \pi_{|T|+1}$. Note that $\pi_{|T|} \in T \backslash S \subseteq [\ell]$, so $\pi_{|T|} \leq \ell$. Furthermore, $\pi_{|T| + 1} \not \in T$, hence in particular $\pi_{|T| + 1} > \ell$, which is a contradiction.
\item[6.] Suppose such a triple product occurs. Since $|S_2| \geq n-k+1$, $y_{S_2}^r$ comes from a descent at position $|S_2|$ with $c_{|S_2|} = c_{|S_2|+1}$, so we must have $\pi_{|S_2|} > \pi_{|S_2|+1}$. However, $\pi_{|S_2|} \in S_2 \backslash S_1$ and $\pi_{|S_2|+1} \in S_3 \backslash S_2$, so by assumption we have $\pi_{|S_2|} \leq \max(S_2 \backslash S_1) < \min(S_3 \backslash S_2) \leq \pi_{|S_2|+1}$.
\item[7.] We note that
    \begin{align*}
    \deg(\tilde{b}_{(g,\lambda)}) &\leq \deg(\tilde{b}_g) + (k-\des(\sigma)-1)r = \sum_{i=1}^n m_i + (k-\des(\sigma)-1)r \\
        &= \sum_{i=1}^n \left(c_i-c_{i+1}+r \chi(\textup{$i$ is a descent})\right) + (k-\des(\sigma)-1)r \\
        &= c_1 + r \des(\sigma) + (k-\des(\sigma)-1)r = kr + c_1 - r \leq kr-1,
    \end{align*}
    where $c_{n+1} = 0$, and $\chi$ is the indicator function of the indicated event.
\end{itemize}

\textbf{Sufficiency}: Let $m = y_{S'_1} \cdots y_{S'_t}$ be a monomial not divisible by any of the above mentioned monomials. Then combining properties 1. and 7. we may assume $S'_1 \subseteq S'_2 \subseteq \ldots \subseteq S'_t$ and $t < kr$. However, we will rewrite this as $m = y_{S_1}^{t_1} \cdots y_{S_u}^{t_u}$, where $S_1 \subsetneq S_2 \subsetneq \ldots \subsetneq S_u$.

We will first construct the corresponding $g \in G_n$, after which the augmentation $\lambda$ will follow automatically. Firstly, the underlying permutation of $\sigma$ will be given by putting the elements of $S_1$ in ascending order, then the elements of $S_2 \backslash S_1$, $\ldots$, the elements of $S_u \backslash S_{u-1}$ in ascending order and finally the elements of $[n] \backslash S_u$ in ascending order. Now, we have to assign colors to each of the elements. We will give all elements of $S_1$ the same color, all elements of $S_2 \backslash S_1$ the same color, $\ldots$, all elements of $S_u \backslash S_{u-1}$ the same color and finally all elements of $[n] \backslash S_u$ the same color. We will assign these colors in reverse order. Firstly, assign color $0$ to everything in $[n] \backslash S_u$, then assign color $t_u$ to $S_u \backslash S_{u-1}$, then color $t_u + t_{u-1}$ to $S_{u-1} \backslash S_{u-2}$, $\ldots$ and finally color $t_u + t_{u-1} + \ldots + t_1$ to $S_1$. Here, everything should be interpreted modulo $n$. It is an easy check that $m = \tilde{b}_g \cdot y_{S_1}^{r v_1} \cdots y_{S_u}^{r v_u}$, where $v_1, v_2, \ldots, v_u \geq 0$.

Now, let us check that $g$ together with some appropriate $\lambda$ satisfies the condition that $m = b_{(g,\lambda)}$. Firstly, using a similar computation to above, $r \des(g) \leq \deg(\tilde{b}_g) \leq \deg(m) < kr$, hence $\des(g) < k$, as desired. So, to see that the augmented part corresponds to an appropriate $\lambda$ we have to check two things, namely that $v_j = 0$ if $|S_j| \geq n-k+1$ and that $v_1 + \ldots + v_u \leq (k - \des(g) - 1)$. For the latter, note that
\[
r(v_1 + \ldots + v_u) = \deg(m) - \deg(b_g) < kr - \deg(b_g) \leq kr - \des(g) r = (k - \des(g))r,
\]
so $v_1 + \ldots + v_u < k - \des(g)$, as desired. For the first part, note that if $m = |S_j| \geq n-k+1$, then $y_{S_j}$ has exponent at most $r$ by condition 3. Therefore, if $v_j > 0$, we need $v_j = 1$, and the exponent of $y_{S_j}$ in $b_g$ equals $0$. In particular, $\sigma_m$ and $\sigma_{m+1}$ have the same color and $\sigma_m < \sigma_{m+1}$. Furthermore, note that $y_{S_j}$ now has exponent exactly $r$, so in particular we have $S_j \neq [m]$ by condition 2. Now, we distinguish four cases.
\begin{itemize}
\item $j = u = 1$: In this case, $\sigma_m = \max(S_1) > m$ and $\sigma_{m+1} = \min([n] \backslash S_1) \leq m$, a contradiction.
\item $j = 1, u > 1$: In this case, $\sigma_m = \max(S_1)$ and $\sigma_{m+1} = \min(S_2 \backslash S_1)$. By condition 4. this implies $\sigma_m > \sigma_{m+1}$, a contradiction.
\item $1 < j < u$: Now, $\sigma_m = \max(S_j \backslash S_{j-1})$ and $\sigma_{m+1} = \min(S_{j+1} \backslash S_j)$, but then $\sigma_m < \sigma_{m+1}$ contradicts condition 6.
\item $1 < j = u$: Now $\min([n] \backslash S_u) = \sigma_{m+1} > \sigma_m$, so $[\sigma_m] \subseteq S_u$. Furthermore, $\max(S_u \backslash S_{u-1}) = \sigma_m]$ hence $S_u \backslash S_{u-1} \subseteq [\sigma_m]$ so by $[\sigma_m] \subseteq S_u$ this implies $S_u = S_{u-1} \cup [\sigma_m]$. However, this contradicts condition 5.
\end{itemize}
Therefore, we need to have $v_j = 0$ if $|S_j| \geq n-k+1$, completing this part of the proof.

\textbf{Leading monomials}:
\begin{itemize}
\item[1.] These monomials are among the generators of $\JJJ_{n,k}$.
\item[2.] These monomials are the leading monomials of $\theta_m \in \JJJ_{n,k}$.
\item[3.] Write $m = |S|$ and consider $y_S \theta_m \in \JJJ_{n,k}$. All monomials in this polynomial are of the form $y_S \cdot y_T^r$ where $|T| = m = |S|$. Note that all such products have $S$ and $T$ incomparable, except for when $T = S$. Therefore, modulo $\JJJ_{n,k}$ this equals $y_S^{r+1}$, showing that $y_S^{r+1}$ in fact occurs in $\JJJ_{n,k}$.
\item[4.] Write $m = |S|$ and consider $\theta_m \cdot y_T$. All monomials in this polynomial are of the form $y_R^r \cdot y_T$ where $|R| = m = |S|$. Modulo $J_{n,k}$ this is equal to $\sum_R y_R^r \cdot y_T$ where $R$ runs over all such subsets with $R \subseteq T$. By assumption, $S$ is the smallest such set with respect to the monomial order, hence $y_S^r \cdot y_T$ is the leading term of this monomial.
\item[5.] Let $m = |T|$ and note that $\JJJ_{n,k}$ contains $y_S \cdot \theta_T$ which modulo $\JJJ_{n,k}$ reduces to $\sum_R y_S \cdot y_R^r$ where $S \subseteq R$ and $|R| = m$. Since $T = S \cup [\ell]$ it is clear that $T$ is the lexicographically smallest such set, so this polynomial has leading monomial $y_S \cdot y_T^r$.
\item[6.] Let $m = |S_2|$ and consider $y_{S_1} y_{S_3} \cdot \theta_m$. Similarly, this equals $\sum_T y_{S_1} y_T^r y_{S_3}$ modulo $\JJJ_{n,k}$ where $T$ runs over all $m$-element subsets $S_1 \subseteq T \subseteq S_3$. By assumption, $S_2$ is the lexicographically smallest such set, hence $y_{S_1} y_{S_2} y_{S_3}$ can be obtained as a leading monomial.
\item[7.] These monomials are among the generators of $\JJJ_{n,k}$.
\end{itemize}

This completes the proof.
\end{proof}

Similarly, we have the following result.

\begin{theorem}
\label{Grobnertheorem2}
Let $0 \leq k \leq n$ be integers with $n \geq 1$. Then the set $\{\tilde{b}_{(Z,g,\lambda)} \ : \ (Z,g,\lambda) \in \FFF_{n,k}\}$ is precisely the standard monomial basis for $\RRR_{n,k}$.
\end{theorem}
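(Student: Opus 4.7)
The proof will mirror the four-step template of Theorem \ref{Grobnertheorem1}, with the forbidden monomial list being identical to items 1--6 there, while item 7 is replaced by length $kr+1$ multichains $y_{S_1} \cdots y_{S_{kr+1}}$ to match the generators of $\III_{n,k}$ rather than $\JJJ_{n,k}$. The target is to prove that a multichain monomial $m$ avoids every forbidden monomial if and only if $m = \tilde{b}_{(Z,g,\lambda)}$ for some $(Z,g,\lambda) \in \FFF_{n,k}$, and to exhibit each forbidden monomial as a leading term of an element of $\III_{n,k}$.

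For necessity, each $\tilde{b}_{(Z,g,\lambda)}$ is a multichain monomial whose underlying chain is $Z \subseteq S_1 \cup Z \subseteq \cdots \subseteq S_j \cup Z$, where $S_1 \subseteq \cdots \subseteq S_j$ is the chain of $\tilde{b}_{(g,\lambda)}$. Conditions 1--6 can be verified by the same case analysis as in the previous theorem, essentially because the extra factor $y_Z^{kr - \deg(\tilde{b}_{(g,\lambda)})}$ only contributes a variable indexed by the smallest set in the chain and does not disrupt the structural constraints on larger sets. Condition 7 follows immediately from $\deg(\tilde{b}_{(Z,g,\lambda)}) \leq kr < kr+1$. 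For the leading-monomial step, conditions 1--6 are produced by the same witnesses as in Theorem \ref{Grobnertheorem1}, since $y_S y_T$, $\theta_m$, $y_S \theta_m$ and so on all lie in $\III_{n,k}$; condition 7 is a generator of $\III_{n,k}$ by definition.

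The sufficiency direction is where a genuinely new argument is needed, and I would split on the degree of $m$. If $\deg(m) \leq kr-1$, then $m$ also avoids the length $kr$ forbidden monomial of Theorem \ref{Grobnertheorem1}, so that theorem gives $m = \tilde{b}_{(g,\lambda)}$ for a unique $(g,\lambda) \in \OP_{n,k}$; this is $\tilde{b}_{(\emptyset,g,\lambda)}$. If $\deg(m) = kr$, the plan is to let $Z$ be the smallest subset such that $y_Z$ occurs in $m$, factor out the appropriate power of $y_Z$, remove $Z$ from every remaining set in the chain to pass to the alphabet $[n] \setminus Z$, and then apply the degree-less-than-$kr$ case to the remainder. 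The main obstacle I anticipate is showing this decomposition always succeeds: one must verify that $|Z| \leq n-k$ (which I expect to follow by combining conditions 2 and 4 to force the degree to spread across enough distinct set sizes) and that the remainder still satisfies all the forbidden-monomial conditions on the restricted alphabet. These compatibility checks are essentially the reverse of the construction in the earlier theorem establishing $\{\tilde{b}_{(Z,g,\lambda)}\}$ as a basis, and I expect to adapt that uniqueness argument directly.
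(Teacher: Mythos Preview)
Your proposal is correct and follows essentially the same approach as the paper: the same modified forbidden-monomial list (items 1--6 unchanged, item 7 replaced by length $kr+1$ multichains), the same necessity and leading-term arguments, and the same sufficiency split on $\deg(m)$ with the extraction of $Z$ as the smallest occurring subset in the degree-$kr$ case. If anything, you are more explicit than the paper about the verifications needed (that $|Z|\le n-k$ and that the restricted monomial still satisfies the conditions), and your plan to recycle the uniqueness argument from the earlier basis theorem is exactly how those checks are handled.
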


\begin{proof}
Again it suffices to show that the standard monomial basis of $\RRR_{n,k}$ is contained in $\{\tilde{b}_{(Z,g,\lambda)} \ : \ (Z,g,\lambda) \in \FFF_{n,k}\}$. We will show that a monomial $y$ belongs to this set if and only if it is not divisible by any of the monomials in the exact same list as before, except that we need to change the 7th condition into
\begin{itemize}
\item[7'.] \makebox[3cm][l]{\quad$y_{S_1} \cdots y_{S_{kr+1}}$} where $S_1 \subseteq \ldots \subseteq S_{kr+1}$.
\end{itemize}
Again we will go through the steps necessity, sufficiency and show that they occur as leading monomials in $\JJJ_{n,k}$.

\textbf{Necessity:} Condition 1 is clearly still satisfied, and conditions 2-6 follow by the exact same argument, since the appropriate monomials $y_S$ with $|S| \geq n-k+1$ still have to come from the contribution of $g$ to $b_{(Z,g,\lambda)}$, since neither $Z$, nor $\lambda$ will affect the exponent of these. For condition 7', we note that $b_{(Z,g,\lambda)}$ might now have degree $kr$ (when $Z \neq \emptyset$), but will never have degree $kr+1$ or more.

\textbf{Sufficiency:} There are two cases to consider. Let $y$ be a monomial not divisible by any of the monomials specified in the list. We will show that $y$ is of the form $b_{(Z,g,\lambda)}$. If $\deg(y) < kr$, then we set $Z = \emptyset$ and use the same procedure as in Theorem \ref{Grobnertheorem1} to find the appropriate $(g,\lambda)$. If $\deg(y) = kr$, set $Z$ to be the smallest subset $S$ of $[n]$ (in size) such that $y_S$ has positive exponent in $y$. Let $e_S$ be the exponent of $S$ and set $y' = y/y_S^{e_S}$. Now, set $y''$ to be the same monomial as $y'$ where each $y_T$ is replaced by $y_{T \backslash S}$. Since $\deg(y'') < kr$, the same procedure as in Theorem \ref{Grobnertheorem1} can be used to find appropriate $(g,\lambda)$ to complete the triple $(Z,g,\lambda)$.

\textbf{Leading monomials:} For conditions 1-6 the reasoning is exactly the same, since none of them use the multichain generators of $\III_{n,k}$. For condition 7', it again follows immediately since these multichain monomials belong to the generators of $\JJJ_{n,k}$.
\end{proof}

\section{A filtration of $\RRR_{n,k}$ and $\SSS_{n,k}$.}
\label{Filtration}

We will now prove the main result of this paper, namely the following.
\begin{theorem}\label{maintheorem}
We have $G_n$-module isomorphisms $\RRR_{n,k} \cong R_{n,k}$ and $\SSS_{n,k} \cong S_{n,k}$.
\end{theorem}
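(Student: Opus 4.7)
My plan is to prove the two isomorphisms by showing separately that each Stanley--Reisner quotient is isomorphic, as an ungraded $G_n$-module, to the appropriate permutation module. Since Chan and Rhoades already establish $S_{n,k} \cong \CC[\OP_{n,k}]$ and $R_{n,k} \cong \CC[\FFF_{n,k}]$, it then suffices to show $\SSS_{n,k} \cong \CC[\OP_{n,k}]$ and $\RRR_{n,k} \cong \CC[\FFF_{n,k}]$ as $G_n$-modules. I carry out the argument in detail for $\SSS_{n,k}$; the $\RRR_{n,k}$ case runs identically using Theorem~\ref{Grobnertheorem2} and adding some bookkeeping for the zero-block variable $y_Z$.

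The tool will be a $G_n$-stable filtration on $\SSS_{n,k}$ whose associated graded is the permutation module. Order the standard monomial basis $\{\tilde{b}_{(g,\lambda)} : (g,\lambda)\in\OP_{n,k}\}$ of Theorem~\ref{Grobnertheorem1} by the graded-lex monomial order on $\CC[\yy_S]$ fixed in Section~\ref{Background}, and for each $(g,\lambda)\in\OP_{n,k}$ define the descending filtration
\[
F^{(g,\lambda)}\SSS_{n,k} \,=\, \mathrm{span}_{\CC}\bigl\{\,\tilde{b}_{(h,\mu)} : \tilde{b}_{(h,\mu)} \leq \tilde{b}_{(g,\lambda)}\,\bigr\}
\]
together with its strict version $F^{<(g,\lambda)}\SSS_{n,k}$. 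I will verify (a) $F^{\bullet}$ is $G_n$-stable, and (b) on each one-dimensional graded piece $F^{(g,\lambda)}/F^{<(g,\lambda)}$ the induced $G_n$-action sends the class of $\tilde{b}_{(g,\lambda)}$ to a nonzero scalar multiple of the class of $\tilde{b}_{\omega\cdot(g,\lambda)}$ for every $\omega\in G_n$.

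For (a) and (b), note that $\omega\in G_n$ acts on a generator $y_S$ by $\omega\cdot y_S = \alpha_\omega(S)\,y_{\omega(S)}$, where $\omega(S)$ is the image of $S$ under the underlying uncolored permutation and $\alpha_\omega(S)$ is a product of $r$-th roots of unity from the colors. Applied to the multichain monomial $\tilde{b}_{(g,\lambda)}=y_{S_1}^{a_1}\cdots y_{S_j}^{a_j}$, this yields a scalar multiple of the multichain monomial $y_{\omega(S_1)}^{a_1}\cdots y_{\omega(S_j)}^{a_j}$, which by the uniqueness preceding Lemma~\ref{free-basis} can be written as $\tilde{b}_{(h,d)}$ for a unique $(h,d)\in G_n\times\ZZ_{\geq 0}^n$. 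If $d$ is supported in the first $n-k$ coordinates then $(h,d)$ already corresponds to some $(h,\lambda')\in\OP_{n,k}$ under the bijection of Section~\ref{Bases} and the result is manifestly $\alpha_\omega\cdot\tilde{b}_{\omega\cdot(g,\lambda)}$; otherwise one reduces modulo the $\theta_i$'s for $i>n-k$, which by the unitriangular leading-term argument in the proof of Lemma~\ref{free-basis} introduces only multichain monomials strictly below $\tilde{b}_{(g,\lambda)}$ in our monomial order, and which after further Gr\"obner reduction mod $\JJJ_{n,k}$ remain below $\tilde{b}_{(g,\lambda)}$ in the filtration.

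Granted (a) and (b), the associated graded $\mathrm{gr}\,\SSS_{n,k}$ has a $\CC$-basis indexed by $\OP_{n,k}$ on which $G_n$ acts by signed permutations in the natural way. Averaging the root-of-unity scalars over the cyclic color subgroups (equivalently, rescaling each line by a suitable root of unity, which is harmless since the lines are one-dimensional) identifies $\mathrm{gr}\,\SSS_{n,k}$ with the honest permutation module $\CC[\OP_{n,k}]$ as $G_n$-modules. Because $\CC[G_n]$ is semisimple, the filtration $F^{\bullet}$ splits $G_n$-equivariantly, so $\SSS_{n,k}\cong\mathrm{gr}\,\SSS_{n,k}\cong\CC[\OP_{n,k}]\cong S_{n,k}$ as $G_n$-modules. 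The same argument with Theorem~\ref{Grobnertheorem2} and $\FFF_{n,k}$ in place of Theorem~\ref{Grobnertheorem1} and $\OP_{n,k}$ yields $\RRR_{n,k}\cong R_{n,k}$. The main obstacle will be step (b) in the case where the multichain $\omega(S_1)\subseteq\cdots\subseteq\omega(S_j)$ forces the reconstructed pair $(h,d)$ to have $d$-support in positions $>n-k$, so that reduction via the $\theta_i$-relations is genuinely required: one must confirm that the roots-of-unity scalars surviving these reductions do not cancel the leading coefficient. This amounts to a careful bookkeeping of colors along the underlying permutation of $\omega\cdot g$, following the color-determination argument already used in the proof of the uniqueness lemma preceding Lemma~\ref{free-basis}.
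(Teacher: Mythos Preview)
Your filtration is not $G_n$-stable, so claim~(a) fails immediately. The graded-lex order on $\CC[\yy_S]$ is not preserved by $G_n$: already for $r=1$, $n=3$, $k=2$ the transposition $(12)$ sends $y_{\{2,3\}}$ to $y_{\{1,3\}}$, and $y_{\{1,3\}}>y_{\{2,3\}}$ in the order, so $(12)\cdot F^{y_{\{2,3\}}}\not\subseteq F^{y_{\{2,3\}}}$. Claim~(b) is incoherent as stated: since your graded pieces are one-dimensional, the class of $\tilde b_{\omega\cdot(g,\lambda)}$ does not even live in $F^{(g,\lambda)}/F^{<(g,\lambda)}$ unless $\omega\cdot(g,\lambda)=(g,\lambda)$. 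And the underlying hope---that the associated graded is the permutation module $\CC[\OP_{n,k}]$---is false for a deeper reason: the indexing bijection between $\OP_{n,k}$ and the Garsia--Stanton basis is \emph{not} $G_n$-equivariant. In the same example, $(23)$ sends the ordered set partition $\{1,3\}\mid\{2\}$ (whose basis element is $y_{\{1,3\}}$) to $\{1,2\}\mid\{3\}$ (whose basis element is $y_{\{1\}}$), while in $\SSS_{3,2}$ one computes $(23)\cdot y_{\{1,3\}}=y_{\{1,2\}}=-y_{\{1,3\}}-y_{\{2,3\}}$ via $\theta_2$, and $y_{\{1\}}$ does not appear at all.

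The paper does not pass through $\CC[\OP_{n,k}]$. Instead it filters \emph{both} $\SSS_{n,k}$ and $S_{n,k}$ by the partition shape $\mu(y)=(|S_m|,\ldots,|S_1|)$ of a multichain monomial, ordered by dominance. This filtration \emph{is} $G_n$-stable, because $G_n$ acts on a multichain monomial by permuting the underlying sets and multiplying by a root of unity, preserving $\mu$. The heart of the argument (Lemmas~\ref{movelemma1} and~\ref{movelemma2}) is that the ``moves'' which straighten an arbitrary multichain monomial into the Garsia--Stanton basis can be performed in parallel on the $\yy$-side and the $\xx$-side, and on the $\xx$-side the extra terms arising from non-multichain products land in strictly larger $\mu$-pieces. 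Hence the transfer map $\varphi$ induces an honest $G_n$-isomorphism on each $\mu$-graded piece $\VVV_{\unrhd\mu}/\VVV_{\triangleright\mu}\to V_{\unrhd\mu}/V_{\triangleright\mu}$, and semisimplicity then gives $\SSS_{n,k}\cong S_{n,k}$ directly. To repair your argument you would need to abandon both the total monomial order and the detour through $\CC[\OP_{n,k}]$, and instead compare the two quotients along a shape filtration as the paper does.
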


To this end, we need some definitions.
\begin{definition}
\begin{itemize}
\item[1.] For $y = y_{S_1} \cdots y_{S_m} \in \CC[\yy_S]$ a multichain monomial with $S_1 \subseteq \ldots \subseteq S_m$, we let $\mu(y)$ be the partition $(|S_m|,\ldots,|S_1|)$.
\item[2.] For $m \in \CC[\xx_n]$ a monomial, we let $\mu(m)$ be the partition given by $\mu(y)$, where $y$ is the unique multichain monomial with $\varphi(y) = m$.
\end{itemize}
\end{definition}

Now, let $\unrhd$ be the dominance order on partitions. Now let $(\AAA,A)$ be $(\RRR_{n,k},R_{n,k})$ or $(\SSS_{n,k},S_{n,k})$. Now fix $d \geq 0$ and let $\mu \vDash d$. Set
\[
U_{\unrhd \mu} = \mathrm{span}\{m \ : \ \mu(m) \vDash d, \mu(m) \unrhd \mu \} \qquad \textup{and} \qquad \UUU_{\unrhd \mu} = \mathrm{span}\{y \ : \ \mu(y) \vDash d, \mu(y) \unrhd \mu \}.
\]
and define $U_{\triangleright \mu}$ and $\UUU_{\triangleright \mu}$ in a similar fashion. Let $V_{\unrhd \mu}$ be the image of $U_{\unrhd \mu}$ in $A$, $\VVV_{\unrhd \mu}$ be the image of $V_{\unrhd \mu}$ in $\AAA$ and similarly for the other 2. Now, $A$ and $\AAA$ decompose as $G_n$-modules as
\[
\bigoplus_{d \geq 0} \bigoplus_{\mu \vDash d} V_{\unrhd \mu}/V_{\triangleright \mu} \qquad \textup{and} \qquad \bigoplus_{d \geq 0} \bigoplus_{\mu \vDash d} \VVV_{\unrhd \mu}/\VVV_{\triangleright \mu},
\]
respectively. The proof of Theorem \ref{maintheorem} now follows from the lemma below.

\begin{lemma} \label{maintheoremlemma}
For each $\mu$, $V_{\unrhd \mu}/V_{\triangleright \mu}$ and $\VVV_{\unrhd \mu}/\VVV_{\triangleright \mu}$ have bases $\{b \ : \ \mu(b) = \mu\}$ and $\{\tilde{b} \ : \ \mu(\tilde{b}) = \mu\}$ respectively, where $b$ and $\tilde{b}$ belong to the Garsia-Stanton type bases mentioned before. Furthermore, the map $\tilde{b} \rightarrow b = \varphi(b)$ induces a $G_n$-module isomorphism $\VVV_{\unrhd \mu}/\VVV_{\triangleright \mu} \rightarrow V_{\unrhd \mu}/V_{\triangleright \mu}$.
\end{lemma}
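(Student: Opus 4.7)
The proof naturally breaks into two parts: establishing the claimed bases for the graded pieces, and verifying the $G_n$-equivariance of the transfer map on these pieces.

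For the bases, I would begin with the $\yy$-side. By Theorem \ref{Grobnertheorem1}, the Garsia--Stanton elements $\{\tilde{b}_{(g,\lambda)}\}$ form the standard monomial basis of $\SSS_{n,k}$. Inspection of the Gr\"obner basis of $\JJJ_{n,k}$ shows that each listed relation is $\mu$-pure: the single-monomial generators (items 1, 2, 3, 7 in the proof of Theorem \ref{Grobnertheorem1}) each contribute one $\mu$-value, and the $\theta_i$-based relations (items 4, 5, 6) are sums $\sum_R y_{S_1} y_R^r y_{S_3}$ over all multichain-compatible subsets $R$ of fixed size, whose terms all share the same $\mu$. Consequently, standard-form reduction in $\SSS_{n,k}$ preserves the $\mu$-filtration, so $\VVV_{\unrhd \mu}$ coincides with the span of Garsia--Stanton basis elements with $\mu$-value $\unrhd \mu$, and the asserted basis for $\VVV_{\unrhd \mu}/\VVV_{\triangleright \mu}$ follows immediately. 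For the $\xx$-side, one argues analogously using the Gr\"obner basis of $J_{n,k}$: each generator $x_i^{kr}$ (resp.~$x_i^{kr+1}$) and $e_j(\xx_n^r)$ is $\mu$-pure as a polynomial, and the subtlety that multiplying $e_j(\xx_n^r)$ by a monomial $m'$ may mix $\mu$-values across the resulting terms is controlled by the key technical observation described below.

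For the $G_n$-equivariance, the map $\tilde{b} \mapsto b = \varphi(\tilde{b})$ is a bijection of Garsia--Stanton bases preserving $\mu$, so it extends to a $\CC$-linear isomorphism on each graded piece. Since $\varphi$ intertwines the $G_n$-action on multichain monomials in $\CC[\yy_S]$ with the action on monomials in $\CC[\xx_n]$, applying $g \in G_n$ to a basis element $\tilde{b}$ yields $\alpha \tilde{y}$ for some scalar $\alpha$ and some multichain monomial $\tilde{y}$ with $\mu(\tilde{y}) = \mu$, and correspondingly $g \cdot b = \alpha y$ with $y = \varphi(\tilde{y})$. Reducing $\tilde{y}$ and $y$ modulo the respective filtrations $\VVV_{\triangleright \mu}$ and $V_{\triangleright \mu}$ uses the $\theta_i$-multiple and $e_j(\xx_n^r)$-multiple relations, each truncated to the $\mu$-piece; by the observation, these truncated relations correspond exactly under $\varphi$, so the coefficients in the Garsia--Stanton basis expansions agree, yielding the $G_n$-equivariant isomorphism.

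The main obstacle is the technical observation linking the transfer map to dominance: for a multichain $\tilde{m} = y_{T_1} \cdots y_{T_\ell}$ and a subset $S$ not comparable with some $T_i$, the $\xx$-monomial $\varphi(\tilde{m} \cdot y_S^r)$ has $\mu$-value strictly dominating the partition obtained by inserting $|S|$ with multiplicity $r$ into $\mu(\tilde{m})$. I would verify this by examining the multichain decomposition of $\varphi(\tilde{m} \cdot y_S^r)$: sets of the form $T_i \cup S$, of size strictly larger than $|T_i|$, appear in the decomposition, concentrating exponents in fewer variables and forcing $\mu$ strictly upward in dominance order. With this claim in hand, the non-multichain terms of any $\tilde{m} \cdot \theta_j$ map, under $\varphi$, to $\xx$-monomials with $\mu$-values strictly above the current graded level, hence vanishing in $V_{\triangleright \mu}$. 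This underlies both the filtration preservation on the $\xx$-side and the matching of relations needed for $G_n$-equivariance, completing the proof.
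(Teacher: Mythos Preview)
Your proposal is correct and takes essentially the same approach as the paper: the paper's Lemmas~\ref{movelemma1} and~\ref{movelemma2} encode precisely your observation that the reduction relations are $\mu$-pure on the $\yy$-side and $\mu$-pure modulo strictly dominance-higher terms on the $\xx$-side, with the same key fact that a non-multichain product maps under $\varphi$ to a monomial of strictly larger $\mu$. The paper phrases the reduction as an explicit finite sequence of ``moves'' (replacing $y_S^r$ by $y_S^r - \theta_{|S|}$, respectively $\prod_{i\in S} x_i^r$ by $\prod_{i\in S} x_i^r - e_{|S|}(\xx_n^r)$) carried out in parallel on both sides, but this is equivalent to your truncation-of-relations formulation.
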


This lemma in turn follows from two other lemmas, for which we need another definition.
\begin{definition}
Let $(\AAA,A) = (\SSS_{n,k},S_{n,k})$ (resp. $(\AAA,A) = (\RRR_{n,k},R_{n,k})$). Given a partition $\mu \vDash d$ with parts that are at most $n$ we say that $\mu$ is
\begin{itemize}
\item[1.] \emph{admissible} if $\mu$ has less than $kr$ (resp. $kr+1$) parts, $n-k+1 \leq i \leq n-1$ occurs at most $r$ times and $n$ occurs at most $r-1$ times.
\item[2.] \emph{semi-admissible} if $\lambda$ has less than $kr$ (resp. $kr+1$) parts, has at most $r-1$ parts equal to $n$, but some $n-k+1 \leq i \leq n-1$ occurs at least $r+1$ times.
\item[3.] \emph{non-admissible} if $\lambda$ has at least $kr$ (resp. $kr+1$) parts or has at least $r$ parts equal to $n$.
\end{itemize}
\end{definition}

For example, when $n = 6$, $k = 3$ and $r = 2$, the partitions $(5,5,2,2,2)$, $(6,5,5,5,1)$, $(6,5,4,4,2,2,2,1)$ and $(6,6,2)$ are admissable, semi-admissable, non-admissable and non-admissable respectively, both when $(\AAA,A) = (\SSS_{n,k},S_{n,k})$ and when $(\AAA,A) = (\RRR_{n,k},R_{n,k})$. However, the partition $(6,5,5,2,2,2)$ is non-admissable if $(\AAA,A) = (\SSS_{6,3},S_{6,3})$, but admissable for $(\RRR_{6,3},R_{6,3})$.

Note that $\mu$ is admissible if and only if there exists a basis element $\tilde{b}$ with $\mu(\tilde{b}) = \mu$. A \emph{move} is replacing $y_S^r$ by $y_S^r - \theta_{|S|}$ and cancelling out all non-multichain terms or replacing $x_{i_1}^r \cdots x_{i_j}^r$ by $x_{i_1}^r \cdots x_{i_j}^r - e_j(\xx_n^r)$ (for $i_1 < \ldots < i_j$), depending on what setting one is working in.

The two main lemmas are now as follows:
\begin{lemma} \label{movelemma1}
Let $y$ be a multichain monomial in $\CC[\yy_S]$ with $\mu(y) = \mu$. Then
\begin{itemize}
\item[1.] if $\mu$ is semi-admissible or non-admissible, $y = 0$ in $\AAA$.
\item[2.] if $\mu$ is admissible, one can perform a finite number of moves to find the expansion of $y$ in $\AAA$ in terms of the Garsia-Stanton type basis. Additionally, any multichain monomial $Y$ that ever appears in this process has $\mu(Y) = \mu$.
\end{itemize}
\end{lemma}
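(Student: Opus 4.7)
The plan is to split the lemma into Part~1 (non-admissible or semi-admissible) and Part~2 (admissible), the first being a three-case analysis and the second a straightforward Gr\"obner-reduction argument along the lines of Braun--Olsen~\cite{BO}.

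For Part~1 I would proceed by cases on how admissibility fails. If $\mu$ has at least $kr$ (resp.\ $kr+1$) parts, then $y$ is itself a length-$kr$ (resp.\ length-$(kr+1)$) multichain monomial and hence a multiple of one of the multichain generators in the ideal, giving $y=0$ in $\AAA$. If $\mu$ has at least $r$ parts equal to $n$, then $[n]$ being the unique size-$n$ subset of $[n]$ forces $y_{[n]}^{r}\mid y$, and since $\theta_n=y_{[n]}^{r}$ lies in both $\III_{n,k}$ and $\JJJ_{n,k}$, again $y=0$. Finally, in the semi-admissible case there is some $i\in[n-k+1,n-1]$ with at least $r+1$ parts of $\mu$ equal to $i$, and the chain condition forces $y_{S_0}^{r+1}\mid y$ for a unique size-$i$ set $S_0$. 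A single move at $S_0$ replaces $y$ by $-\sum_{T\neq S_0,\,|T|=i}(y/y_{S_0}^{r})\,y_T^{r}$; every surviving multichain term would need $y_{S_0}$ (one copy still remains) and $y_T$ to be comparable despite $|S_0|=|T|$ and $S_0\neq T$, which is impossible. All surviving terms therefore vanish and $y=0$.

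For Part~2, suppose $\mu$ is admissible and $y$ is not already a Garsia--Stanton basis element. By Theorem~\ref{Grobnertheorem1}/\ref{Grobnertheorem2}, $y$ is divisible by one of the seven listed leading monomials. Admissibility eliminates types~1, 3, 7 along with type~2 at $m=n$, leaving types~2 (with $m<n$), 4, 5, or~6. In each remaining case the leading-monomial hypotheses (``$\min(T\setminus S)>\max(S)$'', ``$T=S\cup[\ell]$'', ``$\max(S_2\setminus S_1)<\min(S_3\setminus S_2)$'', or simply $S=[m]$) amount to the statement that the set $S$ carrying the $r$-th power in the divisor is the lex-largest size-$|S|$ subset compatible with its neighbours in the chain of $y$. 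Applying the corresponding move replaces that $y_S^{r}$ by $-\sum_{T\neq S,\,|T|=|S|}y_T^{r}$; the non-multichain products are killed by the $y_U y_V$-relations, and only chain-compatible $T$'s survive, each contributing a multichain monomial of the same shape $\mu$ and strictly smaller graded-lex value than $y$. Iterate; termination is immediate because graded lex is a well-order on the finite set of multichain monomials of the fixed degree $|y|$, and shape preservation is automatic from $|T|=|S|$.

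The main obstacle is the delicate verification that each move really matches a Gr\"obner reduction, i.e.\ that the type~2/4/5/6 condition selects the correct $S$ to make the move strictly downward. In a chain $\cdots\subseteq S_j\subseteq S\subseteq T\subseteq\cdots$ appearing inside $y$, the type~4 condition makes $S$ the lex-largest size-$|S|$ subset of $T$ overall, and because $S_j\subseteq S$ automatically, any competing size-$|S|$ subset $R\subseteq T$ with $S_j\subseteq R$ is still lex-smaller than $S$; short analogous checks handle types~2, 5, and~6. Once this is established, termination and the identification of the output with the Garsia--Stanton basis expansion are routine Gr\"obner-reduction bookkeeping.
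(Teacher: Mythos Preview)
Your proposal is correct and follows essentially the same route as the paper. The paper's Part~1 is phrased slightly differently---it observes directly that $y_S\theta_{|S|}\equiv y_S^{r+1}$ modulo the non-multichain relations, so $y_S^{r+1}$ lies in the ideal---but this is exactly your ``single move'' argument unpacked; Part~2 in the paper is the same Gr\"obner-style reduction through the four remaining types (your 2, 4, 5, 6), with the same lex-largest verification you flag as the main obstacle.
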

\begin{proof}
For part 1, note that since $\mu$ is semi-admissible or non-admissible, $y$ is divisible by $y_{[n]}^r$, $y_S^{r+1}$ for $n-k+1 \leq |S| \leq n-1$ or a multichain monomial of length $kr$ (resp. $kr+1$). Since the ideal we quotient out by to get $\AAA$ contains $y_S^{r+1} \equiv y_S \cdot \theta_{|S|}$ for $n-k+1 \leq |S| \leq n-1$, we see that all of $y_{[n]}^r$, $y_S^{r+1}$ and the multichain monomials belong to the ideal, hence $y = 0$ in $\AAA$.

For the second part, recall the monomial order on $\CC[\yy_S]$ from before. Now, consider a monomial $y$ with $\mu(y) = \mu$. We claim that if $y$ is not a Garsia-Stanton type monomial we can perform a move and rewrite $y$ as a $\CC$-linear combination of smaller monomials $y'$ with $\mu(y') = \mu(y)$. Indeed, since $\mu$ is admissible, a monomial $y$ that is not a basis monomial is this for one of four reasons (by the classification of monomials that are of this form given in Theorems \ref{Grobnertheorem1} and \ref{Grobnertheorem2}):
\begin{itemize}
\item[1.] $y$ is divisible by $y_{[t]}^r$ for some $n-k+1 \leq t \leq n-1$.
\item[2.] $y$ is divisible by $y_S^r y_T$ for $S \subsetneq T$, $|S| \geq n-k+1$ and $\min(T \backslash S) > \max(S)$.
\item[3.] $y$ is divisible by $y_S y_T^r$ for $S \subsetneq T$, $|T| \geq n-k+1$ and $T = S \cup [\ell]$ for some $\ell$.
\item[4.] $y$ is divisible by $y_{S_1} y_{S_2}^r y_{S_3}$ for $S_1 \subsetneq S_2 \subseteq S_3$, $|S_2| \geq n-k+1$ and $\max(S_2 \backslash S_1) < \min(S_3 \backslash S_2)$.
\end{itemize}
In these cases, apply the move, replacing $y_{[t]}^r$, $y_S^r$, $y_T^r$ and $y_{S_2}^r$ respectively. Any monomial that remains after crossing out non-multichain monomials is obtained by replacing this specific variable by some $y_R^r$, so it suffices to show that $y_R$ is smaller than the replaced monomial. In the first case, $|R| = t$, so $y_R < y_{[t]}$. In the second case, $R$ is a subset of $T$ of size $|S|$ and since $\min(T \backslash S) > \max(S)$, $S$ was the subset corresponding to the largest possible monomial over all $R$, and similarly in the other 2 cases. Therefore, we can rewrite each non-basis monomial in terms of smaller monomials, so at some point we will be left with only basis-monomials as desired.
\end{proof}

For the proof of the second lemma we need the following observation. Note that if $y$ is any monomial in $\CC[\yy_S]$ we can still define $\mu(y)$, even if $y$ is not a multichain monomial. Now, if $y$ is a non-multichain monomial, let $y'$ be the unique multichain monomial with $\varphi(y) = \varphi(y')$. We claim that $\mu(y') \triangleright \mu(y)$. Indeed, starting from $y$ we can repeatedly replace $y_A y_B$ (for $A$ and $B$ incomparable) by $y_{A \cup B} y_{A \cap B}$. Note that on the $\mu$-level this corresponds to replacing $(\ldots, |A|, \ldots, |B|, \ldots)$ by $(\ldots,|A \cup B|,\ldots,|A \cap B|,\ldots)$ which strictly increases the corresponding partition in dominance order (since $A$ and $B$ are incomparable). Note that this local replacement does not change the image under $\varphi$ and since $\mu$ increases every time we can only do this finitely many times. So we will end up with some multichain monomial and by uniqueness this is $y'$. Also, we have done at least one replacement, so indeed $\mu(y')$ is strictly larger than $\mu(y)$.

The $\xx_n$-variable analogue of the above lemma is the following.

\begin{lemma} \label{movelemma2}
Let $m$ be a monomial in $\CC[\xx_n]$ with $\mu(m) = \mu$. Then
\begin{itemize}
\item[1.] if $\mu$ is non-admissible, $m = 0$ in $A$.
\item[2.] if $\mu$ is semi-admissible, then in $A$ we can rewrite $m$ as a sum of monomials $m_{\alpha}$ with $\mu(m_{\alpha}) \triangleright \mu$.
\item[3.] if $\mu$ is admissible, then a finite number of moves can be used to rewrite $m$ as a $\CC$-linear combination of Garsia-Stanton monomials $m'$ with $\mu(m') = \mu$, together with monomials $m_{\alpha}$ with $\mu(m_{\alpha}) \triangleright \mu$. Moreover, if the moves in part 2 of Lemma \ref{movelemma1} are replacing $y_{S_1}^r$, $y_{S_2}^r$, $\ldots$, $y_{S_m}^r$ respectively, then the moves in this case are replacing $\prod_{i \in S_1} x_i^r$, $\prod_{i \in S_2} x_i^r$, $\ldots$, $\prod_{i \in S_m} x_i^r$ respectively.
\end{itemize}
\end{lemma}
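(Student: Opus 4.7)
The three parts of the lemma will be proved in parallel with Lemma \ref{movelemma1}, keeping in mind two twists: on the $\xx$-side there is no analogue of the relation $y_Ay_B=0$ for incomparable $A,B$, so terms killed on the $\yy$-side survive here; and the observation preceding this lemma (that non-multichain $\yy$-monomials have multichain form of strictly larger $\mu$) is precisely what lets us absorb those surviving terms into the $m_\alpha$ with $\mu(m_\alpha)\triangleright\mu$. Throughout, I use that $\varphi$ restricts to a $\mu$-preserving bijection between multichain monomials in $\CC[\yy_S]$ and monomials in $\CC[\xx_n]$.

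For part 1, I would exhibit an explicit generator of the ideal dividing $m$. If $\mu$ has at least $r$ parts equal to $n$, the multichain preimage $y$ of $m$ is divisible by $y_{[n]}^r$, so $m$ is divisible by $\varphi(y_{[n]}^r)=e_n(\xx_n^r)$, which lies in both $I_{n,k}$ and $J_{n,k}$. If instead $\mu$ has at least $kr$ (resp.\ $kr+1$) parts, let $S_1$ be the smallest subset in the chain for $y$; any $j\in S_1$ belongs to every $S_i$, so $x_j^{kr}$ (resp.\ $x_j^{kr+1}$) divides $m$, and this generator of $J_{n,k}$ (resp.\ $I_{n,k}$) forces $m=0$ in $A$.

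For part 2, semi-admissibility supplies some $S$ with $n-k+1\leq|S|\leq n-1$ such that $y_S^{r+1}$ divides $y$, and hence $\prod_{j\in S}x_j^{r+1}$ divides $m$. Using $e_{|S|}(\xx_n^r)\in I_{n,k}\cap J_{n,k}$, I rewrite one factor of $\prod_{j\in S}x_j^r$ as $-\sum_{T\neq S,\,|T|=|S|}\prod_{j\in T}x_j^r$ modulo the ideal. Each resulting monomial $m_T$ has $\yy$-preimage $y\cdot y_T^r/y_S^r$, which is not a multichain because $|T|=|S|$ with $T\neq S$ forces $T$ and $S$ to be incomparable; by the observation preceding the lemma, its unique multichain form has $\mu$ strictly larger than $\mu$, giving $\mu(m_T)\triangleright\mu$ as required.

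For part 3, I mimic the $\yy$-side rewriting of Lemma \ref{movelemma1} move by move. If $m$ is already a Garsia--Stanton monomial, there is nothing to do; otherwise, by Theorems \ref{Grobnertheorem1} and \ref{Grobnertheorem2} the preimage $y$ falls into one of cases 1--4 in the proof of Lemma \ref{movelemma1}, singling out a specific $y_{S_i}^r$ to which a move applies. I perform the corresponding $\xx$-side move $\prod_{j\in S_i}x_j^r\mapsto\prod_{j\in S_i}x_j^r-e_{|S_i|}(\xx_n^r)$, which produces exactly the same sum of terms as in the $\yy$-picture. Those terms whose $\yy$-preimages remain multichains correspond under $\varphi$ to $\xx$-monomials with the same $\mu$; the remaining terms, whose $\yy$-preimages are non-multichains --- precisely the ones killed by $y_Ay_B=0$ in Lemma \ref{movelemma1} --- contribute, via the observation, to the pool of $m_\alpha$ with $\mu(m_\alpha)\triangleright\mu$. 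I then iterate on the same-$\mu$ part; termination is inherited from Lemma \ref{movelemma1}, where each such move strictly decreases the monomial order on the $\yy$-side.

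\textbf{Main obstacle.} The step requiring the most care is part 3: one must verify that the $\yy$-side cancellations driven by $y_Ay_B=0$ match, term by term under $\varphi$, the $\xx$-side terms whose multichain $\yy$-preimages jump in dominance order, and that the surviving same-$\mu$ terms on the two sides correspond exactly, so that the iteration on the $\xx$-side faithfully tracks the iteration on the $\yy$-side and inherits its termination from Lemma \ref{movelemma1}.
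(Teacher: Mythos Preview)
Your proposal is correct and follows essentially the same approach as the paper's own proof: in each part you pass to the multichain preimage $y$ of $m$, identify the relevant divisor ($y_{[n]}^r$, a long multichain, or $y_S^{r+1}$), perform the corresponding $\xx$-side move, and use the observation preceding the lemma to push all non-multichain byproducts into strictly larger dominance class. Your treatment of part~3, tracking the $\yy$-side moves of Lemma~\ref{movelemma1} and sorting the resulting terms into ``same-$\mu$ multichain'' versus ``non-multichain, hence $\triangleright\mu$'', is exactly the paper's ``pull back'' argument, and your identified main obstacle is indeed the only point requiring care.
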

\begin{proof}
Let $y$ be the multichain monomial associated to $m$.

For the first case, since $\lambda$ is non-admissible, $y$ is divisible by either $y_{[n]}^r$ or a multichain of length $kr$ (resp. $kr+1$). In the first case, $e_n(\xx_n^r) = x_1^r\cdots x_n^r$ divides $m$, hence $m = 0$ in $R_{n,k}$. In the second case, let $j$ be an element that is in the smallest $S$ such that $y_S$ occurs in the multichain. Then $x_j^{kr}$ (resp. $x_j^{kr+1}$) divides $m$ and consequently $m = 0$ in $A$.

In the second case, we have that $y$ is divisible by $y_S^{r+1}$ for some $S$ with $n-k+1 \leq S \leq n-1$. Suppose $S = \{i_1,\ldots,i_j\}$. Then apply the move by replacing $x_{i_1}^r \cdots x_{i_j}^r$. We can ``pull back'' the move to $\CC[\yy_S]$, where we replace $y_S^r$ by $y_S^r - \theta_{|S|}$, but we \emph{do not} get rid of non-multichains. Now, any monomial that occurs will contain $y_S y_T$ for $|S| = |T|$ but $S \neq T$, hence would have been removed in the $y$-setting, but in the $x$-setting these monomials remain. However, by the above observation, all of these monomials have strictly smaller $\mu$-partition, as desired.

In the third case, again ``pull back'' to the $y$-setting and do the exact same sequence of moves as in part 2 of the above lemma, but again we do not get rid of non-multichains. Instead, we replace them by multichains with the same image under $\varphi$ and again this will strictly increase the $\mu$-partition.
\end{proof}

As an example of this phenomenon, consider $r = 2$ and $\SSS_{5,4}$. In the $\yy$-variable setting, consider $y = y_{\{5\}}^3 y_{\{2,5\}}^2 y_{\{1,2,3,5\}}^2$. Note that this is not yet of the form $\tilde{b}_{(g,\lambda)}$, for example since the appearance of $y_{\{2,5\}} y_{\{1,2,3,5\}}^2$ violates condition 5 in the proof of Theorem \ref{Grobnertheorem1}. Therefore, we apply a step and replace $y_{\{1,2,3,5\}}^2$ by $y_{\{1,2,3,5\}}^2 - \theta_4$ and after getting rid of any monomial that is not a multichain monomial we find that
\[
y \equiv - y_{\{5\}}^3 y_{\{2,5\}}^2 y_{\{1,2,4,5\}}^2 - y_{\{5\}}^3 y_{\{2,5\}}^2 y_{\{2,3,4,5\}}^2.
\]
Here, the first monomial is $\tilde{b}_{(5^1 2^0 1^0 4^0 3^0),(1)}$ is of the desired form. However, the second monomial contains $y_{\{5\}} y_{\{2,5\}}^2 y_{\{2,3,4,5\}}$, which violates condition 6 in the proof of Theorem \ref{Grobnertheorem1}. Therefore, we perform a step on $y_{\{2,5\}}^2$ and get that
\[
y \equiv - y_{\{5\}}^3 y_{\{2,5\}}^2 y_{\{1,2,4,5\}}^2 + y_{\{5\}}^3 y_{\{3,5\}}^2 y_{\{2,3,4,5\}}^2 + y_{\{5\}}^3 y_{\{4,5\}}^2 y_{\{2,3,4,5\}}^2,
\]
and one can check that all monomials appearing here are indeed of the form $\tilde{b}_{(g,\lambda)}$. Also, note that we started with a monomial with $\mu$-partition $(4,4,2,2,1,1,1)$ and each monomial kept that form.

Now, in the $x$-variable setting we have to consider $x_5^7 x_2^4 x_1^2 x_3^2$. Replacing the monomial $(x_1x_2x_3x_5)^2$ by $(x_1x_2x_3x_5)^2 - e_4(x_1^2,x_2^3,x_3^2,x_4^2,x_5^2)$ we get
\[
x_5^7 x_2^4 x_1^2 x_3^2 \equiv - x_5^7 x_2^4 x_1^2 x_4^2 - x_5^7 x_2^4 x_3^2 x_4^2 - x_5^7 x_1^2 x_2^2 x_3^2 x_4^2 - x_5^5 x_2^4 x_1^2 x_3^2 x_4^2.
\]
Here, the first monomial is a generalized Garsia-Stanton monomial, the second monomial is one we have to perform another step on. Now, the last two monomials have $y$-monomial $y_{\{5\}}^5 y_{\{1,2,3,4,5\}}^2$ and $y_{\{5\}} y_{\{2,5\}}^2 y_{\{1,2,3,4,5\}}^2$ respectively, hence they have $\mu$-partitions $(5,5,1,1,1,1,1)$ and $(5,5,2,2,1)$. Now, it holds that $(5,5,1,1,1,1,1) \triangleright (4,4,2,2,1,1,1)$ and $(5,5,2,2,1) \triangleright (4,4,2,2,1,1,1)$. Therefore,
\[
x_5^7 x_2^4 x_1^2 x_3^2 \equiv - x_5^7 x_2^4 x_1^2 x_4^2 - x_5^7 x_2^4 x_3^2 x_4^2 + \textup{monomials with larger $\mu$-partition},
\]
and hence the first step of the algorithm carries out in the a way similar to the first step in the $\yy$-variable setting. Now, applying an analogous step to $x_5^7 x_2^4 x_3^2 x_4^2$ we find that
\[
x_5^7 x_2^4 x_1^2 x_3^2 \equiv - x_5^7 x_2^4 x_1^2 x_4^2 + x_5^7 x_3^4 x_2^2 x_4^2 + x_5^7 x_4^4 x_2^2 x_3^2 + \textup{monomials with larger $\mu$-partition},
\]
which indeed show that even though the expansion of $y$ and $x_5^7 x_2^4 x_1^2 x_3^2$ in the Garsia-Stanton bases are not identical, the monomials that appear and have the same $\mu$-partition as the original monomial \emph{do} coincide, and their coefficients agree.

Lemma \ref{maintheoremlemma} is now an easy application of the Lemmas \ref{movelemma1} and \ref{movelemma2}.
\begin{proof}[Proof of Lemma \ref{maintheoremlemma}]
If $\mu$ is non-admissible or semi-admissible the above lemmas show that $V_{\unrhd \mu}/V_{\triangleright \mu}$ and $\VVV_{\unrhd \mu}/\VVV_{\triangleright \mu}$ are both trivial $G_n$-modules.

Now, suppose $\mu$ is admissible. We need to show that sending $\tilde{b}$ to $b = \varphi(\tilde{b})$ (for $\tilde{b}$ a Garsia-Stanton monomial with $\mu(\tilde{b}) = \mu$) induces a $G_n$-module isomorphism. For $g \in G_n$ we can rewrite $g \cdot \tilde{b}$ in the Garsia-Stanton basis using the moves from part 2 of Lemma \ref{movelemma1}. Since the multichain monomial corresponding to $\pi b$ is given by $\pi \tilde{b}$ we can use part 3 of Lemma \ref{movelemma2} to rewrite $\pi b$ in the same way in this given basis (viewed as basis for $V_{\unrhd \mu}/V_{\triangleright \mu}$), since all the additional monomials that appear belong to $V_{\triangleright \mu}$ and hence are $0$ in the quotient.
\end{proof}

\section{Multi-graded Frobenius series}
\label{Frobenius}

Recall that the irreducible representations of the symmetric group $\symm_n$ are indexed by partitions $\lambda \vdash n$, and the representation corresponding to $\lambda$ is typically denoted $S^{\lambda}$. We have the Frobenius map $\Frob$ from the set of (equivalence classes) of representations of $\symm_n$ to the space of symmetric functions given by linear extension of $\Frob(S^{\lambda}) = s_{\lambda}(\xx)$, where $s_{\lambda}(\xx)$ is the Schur polynomial associated to $\lambda$ in an infinite variable set $\xx = (x_1,x_2,x_3,\ldots)$. Explicitly, if $M$ is a $\symm_n$-module with $M \cong \oplus_{\lambda \vdash n} (S^{\lambda})^{\oplus n_{\lambda}}$ we set
\[
\Frob(M) = \sum_{\lambda \vdash n} n_{\lambda} s_{\lambda}(\xx).
\]

Note that if $\mu$ is a partition with parts at most $n$ and if $d$ is a nonnegative integer, the subspaces $U_d = \mathrm{span}\{m \ : \ \mathrm{deg}(m) = d\}$ and $\UUU_{\mu} = \mathrm{span}\{m \ : \ \mu(m) = \mu\}$ are $\symm_n$-stable subspaces of $\CC[\xx_n]$ and $\CC[\yy_S]$ respectively, and hence so are their images $V_d$ and $\VVV_{\mu}$ in $S_{n,k}$ and $\SSS_{n,k}$ (for $r=1$) respectively. The graded Frobenius character and multi-graded Frobenius character of $S_{n,k}$ and $\SSS_{n,k}$ are
\begin{align*}
\grFrob(S_{n,k};q) &= \sum_{d=0}^{\infty} q^d \Frob(V_d); \\
\grFrob(\SSS_{n,k};t_1,\ldots,t_n) &= \sum_{\mu} t_1^{m_1(\mu)} \cdots t_n^{m_n(\mu)} \Frob(\VVV_{\mu}),
\end{align*}
where the sum is over all partitions with parts at most $n$, and $m_i(\mu)$ is the number of parts of $\mu$ equal to $i$. We can determine the graded Frobenius image $\grFrob(\SSS_{n,k};t_1,\ldots,t_n)$.

\begin{theorem}\label{Frobeniusseries}
Suppose that $r = 1$. Then
\[
\grFrob(\SSS_{n,k};t_1,\ldots,t_n) = \sum_{\substack{\alpha\models n \\ \ell(\alpha)\le k}} \left( \prod_{i\in D(\alpha)} t_i \right) \left( \sum_{ j_1+\cdots+j_{n-k} \le k-\ell(\alpha)} \prod_{i=1}^{n-k} t_i^{j_i}  \right) s_\alpha,
\]
where the sum runs over compositions $\alpha = (\alpha_1,\ldots,\alpha_m)$ of $n$ and $D(\alpha) = \{\alpha_1,\ldots,\alpha_1+\ldots+\alpha_{m-1}\}$. Furthermore, by setting $t_i = q^i$ we recover the graded Frobenius character of $S_{n,k}$, in accordance with \cite[Corollary 6.13]{HRS} and \cite[Corollary 6.3]{HR}.
\end{theorem}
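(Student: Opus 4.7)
The plan is to use the $\mu$-grading on $\SSS_{n,k}$. Each generator of $\JJJ_{n,k}$—the squarefree monomials $y_Sy_T$, the $\theta_i$ for $i > n-k$, and the multichain products—is $\mu$-homogeneous in $\CC[\cB_n^*]$, so we obtain a $\symm_n$-equivariant decomposition $\SSS_{n,k} = \bigoplus_\mu \VVV_\mu$. By Theorem~\ref{Grobnertheorem1}, $\VVV_\mu \neq 0$ exactly when $\mu$ is admissible, and then $\VVV_\mu$ has basis $\{\tilde b_{(g,\lambda)} : \mu(\tilde b_{(g,\lambda)}) = \mu\}$. The task therefore reduces to computing each $\Frob(\VVV_\mu)$ and reorganizing the resulting sum
\[
\grFrob(\SSS_{n,k};t_1,\ldots,t_n) \;=\; \sum_{\mu\text{ admissible}} \Bigl(\prod_i t_i^{m_i(\mu)}\Bigr)\,\Frob(\VVV_\mu).
\]

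To analyze $\VVV_\mu$, I would first observe that the ambient span $\UUU_\mu \subseteq \CC[\cB_n^*]$ of multichain monomials of shape $\mu$ is, as a $\symm_n$-module, the permutation representation on flags of subsets whose sizes are the distinct parts of $\mu$; hence $\Frob(\UUU_\mu) = h_{\alpha(\mu)}$ for the composition $\alpha(\mu) = (a_s,\,a_{s-1}-a_s,\ldots,n-a_1)$ of $n$ attached to the distinct parts $a_1 > \cdots > a_s$ of $\mu$. The kernel of $\UUU_\mu \twoheadrightarrow \VVV_\mu$ is generated by $\theta_i \cdot \UUU_{\mu\setminus\{i\}}$ as $i$ ranges over the large parts $L_\mu = \{i \in \mu : i > n-k\}$ (admissibility forces each such $i$ to have multiplicity one). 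An inclusion–exclusion on $I \subseteq L_\mu$ should produce
\[
\Frob(\VVV_\mu) \;=\; \sum_{I\subseteq L_\mu}(-1)^{|I|}\,h_{\alpha(\mu\setminus I)}.
\]
Justifying this requires proving that the multiplication maps $\prod_{i\in I}\theta_i\colon \UUU_{\mu\setminus I}\to\UUU_\mu$ are injective with the expected intersection pattern $\bigcap_{i\in I}\theta_i\UUU_{\mu\setminus\{i\}} = \bigl(\prod_{i\in I}\theta_i\bigr)\UUU_{\mu\setminus I}$, which ultimately comes down to the fact that these products ``cap'' chains in a manner compatible with $\symm_n$-averaging.

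Next I would reindex. Setting $\nu = \mu\setminus I$ and grouping by $\alpha = \alpha(\nu)$, every admissible $\mu$ is recovered uniquely as $\nu\cup I$ with $I \subseteq \{n-k+1,\ldots,n-1\}\setminus D(\alpha)$ and the small distinct parts of $\nu$ (those in $D(\alpha)\cap[1,n-k]$) carrying any positive multiplicities $m_d = j_d+1$; the admissibility bound $\ell(\mu) < k$ becomes $|I| + \sum j_d \leq k - \ell(\alpha)$. The $\mu$-weight factors as $\prod_{d\in D(\alpha)}t_d \cdot \prod_{d\in D(\alpha)\cap[1,n-k]} t_d^{j_d}\cdot \prod_{i\in I}t_i$, and substituting the ribbon-Schur expansion $h_\alpha = \sum_{\beta\preceq\alpha} s_\beta$ (together with its M\"obius inversion) lets the alternating sum over $I$ telescope against the alternating sum over coarsenings $\beta$. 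The outcome should be that the coefficient of each $s_\alpha$ in $\grFrob(\SSS_{n,k};t)$ collapses to $\prod_{i\in D(\alpha)} t_i\cdot \sum_{\sum j_i \leq k-\ell(\alpha)}\prod_{i=1}^{n-k} t_i^{j_i}$, matching the stated formula.

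The main obstacle I anticipate is this final telescoping: one must show that, after summing over all $\alpha$ and expanding $h_\alpha$ in the ribbon basis, the alternating signs from $I$ combine with the M\"obius signs to leave exactly the clean ``descent product'' $\prod_{i\in D(\alpha)}t_i$ multiplied by the unrestricted generating function for small-part multiplicities subject to the length bound. Once this combinatorial identity is in hand, the final sentence about $t_i \mapsto q^i$ recovering the formulas of \cite[Corollary 6.13]{HRS} and \cite[Corollary 6.3]{HR} is immediate from the grading-preserving isomorphism $\SSS_{n,k}\cong S_{n,k}$ given by Theorem~\ref{maintheorem} and Lemma~\ref{maintheoremlemma}.
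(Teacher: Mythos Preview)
Your approach is correct in outline but takes a genuinely different route from the paper. The paper never computes $\Frob(\VVV_\mu)$ individually. Instead it (i) invokes Garsia--Stanton's global formula for $\grFrob(\CC[\cB_n^*];t_1,\dots,t_n)$, which already comes out in the factored form $\prod_{i=1}^n(1-t_i)^{-1}\sum_\alpha\bigl(\prod_{i\in D(\alpha)}t_i\bigr)s_\alpha$; (ii) uses the freeness of $\CC[\cB_n^*]$ over $\CC[\theta_{n-k+1},\dots,\theta_n]$ (Corollary~\ref{freeness}) to simply divide by $\prod_{i=n-k+1}^{n}(1-t_i)^{-1}$; and (iii) truncates to total $\mu$-length below $k$. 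No inclusion--exclusion or telescoping is needed.

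Your plan essentially re-derives steps (i)--(ii) by hand, one $\mu$ at a time. The inclusion--exclusion formula $\Frob(\VVV_\mu)=\sum_{I\subseteq L_\mu}(-1)^{|I|}h_{\alpha(\mu\setminus I)}$ is valid, but the ``injectivity and intersection pattern'' you flag is exactly what Corollary~\ref{freeness} gives you: since $\CC[\cB_n^*]$ is free over $\CC[\theta_{n-k+1},\dots,\theta_n]$, the $\theta_i$ form a regular sequence and the Koszul complex is exact. As for the telescoping you identify as the main obstacle, it does go through: after expanding $h_{\alpha(\nu)}=\sum_{D(\beta)\subseteq D(\alpha(\nu))}s_\beta$ and swapping sums, for fixed $\beta$ one gets a sign-reversing involution that moves any large index $\ell\in(L\cup I)\setminus D(\beta)$ between $L$ and $I$; the fixed points have $I=\emptyset$ and $L=D(\beta)\cap\{n-k+1,\dots,n-1\}$, and reparametrizing the small-part multiplicities recovers the stated coefficient of $s_\beta$. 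So your argument can be completed, and it has the side benefit of producing $\Frob(\VVV_\mu)$ explicitly; the cost is that it is substantially longer than the three-line factor-and-truncate argument the paper actually gives.
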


\begin{proof}
We write $\beta\preceq\alpha$ if $D(\beta)\subseteq D(\alpha)$. It is well known that
\[
h_\alpha = \sum_{\beta\preceq \alpha} s_\beta.
\]

It follows from work of Garsia and Stanton \cite{GS} that
\begin{align*}
\grFrob(\CC[\cB_n^*];&t_1,\ldots,t_n) = \sum_{\gamma\models n} \left( \prod_{i\in D(\gamma)} (t_i+t_i^2+\cdots) \right) h_\gamma \\
= & \sum_{\alpha\models n} \sum_{\alpha\preceq\gamma} \left( \prod_{i\in D(\gamma)} (t_i+t_i^2+\cdots) \right) s_\alpha \\
= & \sum_{\alpha\models n} \left( \prod_{i\in D(\alpha)} (t_i+t_i^2+\cdots) \right) \sum_{\alpha\preceq\gamma} \left( \prod_{i\in D(\gamma)\setminus D(\alpha)} (1+t_i+t_i^2+\cdots) \right) s_\alpha \\
= & \sum_{\alpha\models n} \left( \prod_{i\in D(\alpha)} (t_i+t_i^2+\cdots) \right) \left( \prod_{i\in D(\alpha^c)} (1+t_i+t_i^2+\cdots) \right) s_\alpha \\
= & \left( \prod_{i=1}^n (1+t_i+t_i^2+\cdots) \right) \sum_{\alpha\models n} \left( \prod_{i\in D(\alpha)} t_i \right) s_\alpha
\end{align*}

The Hilbert series of the polynomial algebra $\CC[\theta_{n-k+1},\ldots,\theta_n]$ is
\[
\Hilb(\CC[\theta_{n-k+1},\ldots,\theta_n]; t_1,\ldots,t_n) = \prod_{i=n-k+1}^n (1+t_i+t_i^2+\cdots).
\]
Since $\CC[\cB_n^*]$ is a free module over $\CC[\theta_1,\ldots,\theta_n]$ (Corollary \ref{freeness}) and the action of $\symm_n$ on $\CC[\cB_n^*]$ is linear over $\CC[\theta_1,\ldots,\theta_n]$, we can rewrite $\grFrob (\CC[\cB_n^*];t_1,\ldots,t_n)$ as
\[
\Hilb(\CC[\theta_{n-k+1},\ldots,\theta_n];t_1,\ldots,t_n) \cdot \grFrob(\CC[\cB_n^*]/\langle\theta_{n-k+1},\ldots,\theta_n\rangle;t_1,\ldots,t_n),
\]
hence we have
\[
\grFrob(\CC[\cB_n^*]/\langle\theta_{n-k+1},\ldots,\theta_n\rangle; t_1,\ldots,t_n) = \left(\prod_{i=1}^{n-k} (1+t_i+t_i^2+\cdots) \right) \sum_{\alpha\models n} \left( \prod_{i\in D(\alpha)} t_i \right) s_\alpha.
\]
Modulo all length $k$ multichains, which results in retaining everything in degree less than $k$ and removing everything in degree $k$ and above, we have
\[
\grFrob(\SSS_{n,k};t_1,\ldots,t_n) = \sum_{\substack{\alpha\models n \\ \ell(\alpha)\le k}} \left( \prod_{i\in D(\alpha)} t_i \right) \left( \sum_{ j_1+\cdots+j_{n-k} \le k-\ell(\alpha)} \prod_{i=1}^{n-k} t_i^{j_i}  \right) s_\alpha.
\]

We can interpret each $(j_1,\ldots,j_{n-k})$ as a partition fitting inside a $(n-k) \times (k-\ell(\alpha))$ box, by letting $j_i$ be the number of rows of length $j_i$ (and by letting $(j_1,\ldots,j_{n-k})$ run we obtain all such partitions). Now, the size of the corresponding partition is equal to $j_1 + 2j_2 + \ldots + (n-k)j_{n-k}$ so setting $t_i = q^i$ yields
\[
\sum_{\alpha \models n} q^{\maj(\alpha)} \sum_{\lambda \in (n-k) \times (k - \ell(\alpha))} q^{|\lambda|} s_{\alpha} = \sum_{\alpha\models n } q^{\maj(\alpha)} \binom{n-\ell(\alpha)}{k-\ell(\alpha)}_q s_\alpha
\]
using the fact that $\sum_{\lambda \in a \times b} q^{|\lambda|} = \binom{a+b}{b}_q$. Note that this is indeed the expression for $\grFrob(S_{n,k};q)$.
\end{proof}

\begin{remark}
The Frobenius character map has an analogue for $G_n$ as well \cite[Section 2.4]{CR}. The proofs in Section \ref{Filtration} show that $\RRR_{n,k}$ and $\SSS_{n,k}$ are a refined version of the graded $G_n$-modules $R_{n,k}$ and $S_{n,k}$, in the sense that
\begin{align*}
\grFrob(\RRR_{n,k};q,q^2,\ldots,q^n) &= \grFrob(R_{n,k};q); \\
\grFrob(\SSS_{n,k};q,q^2,\ldots,q^n) &= \grFrob(S_{n,k};q),
\end{align*}
of which we just explicitly handled the case $(\SSS_{n,k},S_{n,k})$ for $r = 1$. By finding the graded Frobenius image of $\CC[\cB_n^*]$ as a $G_n$-module and factoring out $\prod_{i=1}^n (1+x_i^r+x_i^{2r}+\ldots)$ one can obtain a similar result for general $r$. Because of the relative importance of the $\symm_n$ case over the case for general $G_n$ we decided to only prove the result in the setting of the symmetric group and leave the details for general $r$ to the interested reader.
\end{remark}

\section{Conclusion}
\label{conclusion}
In this paper we studied quotients $\RRR_{n,k}$ and $\SSS_{n,k}$ of $\CC[\yy_S]$ that generalize the coinvariant algebra attached to the complex reflection group $G_n$. Furthermore, we studied bases of these rings that are naturally indexed by the set $\FFF_{n,k}$ of $k$-dimensional faces in the Coxeter complex attached to $G_n$ and by the set of $r$-colored ordered set partitions with $k$ parts respectively.

When $k = n$, these basis elements $b_g$ of the coinvariant algebra are indexed by $r$-colored permutations $g$ of $[n]$. These basis elements have the property that $\deg(b_g) = r \des(g) + c_n$ (where $c_n$ is the color of the last element of the permutation) and $\widetilde{\deg}(b_g) = \maj(g)$, where $\widetilde{\deg}(y)$ is the degree of a monomial in $\CC[\yy_S]$ if each variable has degree $\widetilde{\deg}(y_S) = |S|$. For general $k$, and $\sigma$ an $r$-colored ordered set partition with $k$ parts, the associated basis element $b_{\sigma}$ has the property that $\widetilde{\deg}(b_{\sigma}) = \comaj(\sigma)$. Therefore, it algebraically makes sense to define a descent statistic on $\OP_{n,k}$ by $\des(\sigma) = \lfloor \deg(b_g)/r \rfloor$. This raises the following question.
\begin{problem}
Find a combinatorial interpretation for this descent statistic, or possibly a complementary ascent statistic defined via either $\asc(\sigma) = (k-1) - \des(\sigma)$ or $\asc(\sigma) = (n-1) - \des(\sigma)$.
\end{problem}

Another question that remains unanswered is the analogue of a question asked by Chan and Rhoades \cite[Problem 7.1]{CR}, namely how to generalize this to arbitrary complex reflection groups $W \subseteq \GL_n(\CC)$. Just as for $r > 1$ the combinatorics of our quotient $\RRR_{n,k}$ is controlled by the $k$-dimensional faces of the Coxeter complex attached to $G_n$, one might wonder whether the following exists.
\begin{problem}
For any complex reflection group $W \subseteq \GL_n(\CC)$ and any $0 \leq k \leq n$ find a quotient $\RRR_{W,k}$ of $\CC[\yy_S]$ whose combinatorics is controlled by the $k$-dimensional faces of something resembling a Coxeter complex attached to $W$.
\end{problem}
One might hope that answering this question would answer the question posed by Chan and Rhoades, by trying to push $\RRR_{W,k}$ forward using the transfer map $\varphi$.

\section{Acknowledgements}
The author is very grateful to Jia Huang for suggesting the approach in Lemma \ref{free-basis}, pointing out that this can be used to deduce Theorem \ref{Frobeniusseries} and for his help clarifying various details. Furthermore, the author would like to thank Brendon Rhoades for suggesting this problem and his feedback on preliminary versions of this paper. The author also thanks the anonymous referee for the helpful comments.


\begin{thebibliography}{99}

\bibitem{BO} B. Braun, and M. Olsen. Euler-Mahonian statistics and descent bases for semigroup algebras.
    {\it European J. Combin.}, {\bf 69} (2018), 237--254.

\bibitem{CR} K. Chan, and B. Rhoades. Generalized coinvariant algebras for wreath products.
    Preprint, 2017. {\tt arXiv:1701.06256}.

\bibitem{C}  C. Chevalley.  Invariants of finite groups generated by reflections.
    {\it Amer. J. Math.}, {\bf 77 (4)} (1955), 778--782.

\bibitem{CLOS} D. Cox, J. Little, and D. O'Shea. Ideals, Varieties, and Algorithms.
    Springer International Publishing Switzerland, 2015.

\bibitem{G}  A. M. Garsia.  Combinatorial methods in the theory of Cohen-Macaulay rings.
    {\it Adv. Math.}, {\bf 38} (1980), 229--266.

\bibitem{GS}  A. M. Garsia and D. Stanton.  Group actions on Stanley-Reisner rings and invariants of permutation  groups.
    {\it Adv. Math.}, {\bf 51 (2)} (1984), 107--201.

\bibitem{HRS} J. Haglund, B. Rhoades, and M. Shimozono. Ordered set partitions, generalized coinvariant algebras, and the Delta Conjecture.
    {\it Adv. Math.}, {\bf 329} (2018), 851--915.

\bibitem{HR} J. Huang, and B. Rhoades. Ordered set partitions and the $0$-Hecke algebra.
    {\it Algebraic Combinatorics}, {\bf 1 (1)} (2018), 47--80.



\end{thebibliography}
\end{document}